\documentclass{article}
\usepackage[utf8]{inputenc}

\usepackage{graphicx,graphics}

\usepackage{rotating}
\usepackage[twoside]{geometry}
\usepackage{epsfig}
\usepackage{cmap}

\usepackage{graphicx,graphics}

\usepackage{hyperref}
\hypersetup{
pdftitle={paper},            
colorlinks=true,      
linkcolor=black,         
citecolor=blue}

\usepackage{amssymb,amsfonts,amstext,amsthm}
\usepackage{mathrsfs}
\usepackage[intlimits]{amsmath}
\usepackage{comment} 

\newtheorem{definition}{\bf Definition}[section]
\newtheorem{proposition}{\bf Proposition}[section]

\newtheorem{lemma}{\bf Lemma}[section]
\newtheorem{theorem}{\bf Theorem}[section]

\newtheorem{remark}{Remark}[section]

\newcounter{SE}

\title{Eigenfunction correlators under power-law SULE and localization for lattice operators}
\author{M. Aloisio, C. R. de Oliveira, R. Matos, D. Oliveira and M. Pigossi}

\begin{document}

\maketitle
\sloppy
\begin{abstract}
We develop a deterministic framework showing that a power-law form of semi-uniform localization of eigenfunctions (SULE) imposes strong structural constraints on lattice operators,  with consequences of both spectral and dynamical nature. For instance, as spectral consequences we prove that power-law SULE yields geometric constraints on localization centers (such as their equidistribution) and quantitative bounds on eigenfunction correlators. As a dynamical consequence we obtain power-law localization in the sense of finite $q$-moments (up to a certain power $q$) of the position operator. Conversely, suitable bounds on eigenfunction correlators imply a corresponding form of power-law SULE, establishing a close connection between these notions. This highlights the role of power-law SULE as a structural mechanism governing localization beyond the exponential regime, including features typically associated with random operators, such as Anderson-type models. Our results reveal that power-law localization is intrinsically geometric: the spatial distribution of localization centers directly influences eigenfunction correlators and transport properties. As an application, we obtain power-law localization for long-range lattice operators with Stark-type potentials of sublinear growth whose spectral regime exhibits asymptotically collapsing spectral gaps and quasi-resonant structures, without relying on perturbative methods. Applications to long-range random operators are also discussed.
\end{abstract}

\ 

\noindent{\bf Keywords}: power-law localization, quantum dynamics and spectrum.

\

\noindent{\bf  AMS classification codes}: 28A80 (primary), 42A85 (secondary).    

\renewcommand{\thetable}{\Alph{table}}


\newpage

\section{Introduction}

\subsection{Contextualization}

The phenomenon of dynamical localization in quantum dynamics for random models with finite-range hopping has long been a classical subject of investigation, dating back to Anderson's seminal work \cite{And58}. From a mathematically rigorous perspective, tools such as the fractional moment method \cite{Aizenman,Aizenman2}, multiscale analysis \cite{Spencer,Klein-VonD,Germ-Klein}, and the semi-uniform localization of eigenfunctions (SULE) \cite{DamanikFillman1,DJLS,Tcheremchantsev} form a robust framework for this setting.

While these models describe a broad class of physical systems, situations involving long-range hopping with power-law decay arise naturally in contexts such as dipolar Frenkel excitons, nuclear spin systems, and the quantum Kepler model \cite{AltshulerL1997,Shi2}. Recently, numerous studies have uncovered localization phenomena in systems governed by random \cite{Disertori,Sun2,Sunpartin,Shi1,Shi2,Shi4,Shi6}, quasi-periodic \cite{Shi5,Shi55}, and Stark operators \cite{Aloisio,Sun} with power-law hopping. Beyond their theoretical interest, these mechanisms are motivated by their connection to the quantum suppression of chaos \cite{Shi2} and Stark localization \cite{Oliveira01,Pigossi1,Pigossi,Roeck,Hu2,Hu,Nazareno,Sun3}.

To quantify dynamical localization, let $d \in \mathbb{N}$ and let $H$ be a self-adjoint operator in $\ell^2(\mathbb{Z}^d)$. For wave packet solutions $\psi(t) = e^{-itH}\psi$ of the Schr\"odinger equation, spreading is measured through moments of the position operator. We say that $H$ exhibits dynamical localization if, for all initial conditions defined by the canonical basis $\{\delta_k\}$, each moment is uniformly bounded in time, i.e.,
\begin{equation*}
\sup_{t \in \mathbb{R}} \sum_{n \in \mathbb{Z}^d} \langle n \rangle^q  |\langle e^{-itH}\delta_k ,\delta_n \rangle|^2 < \infty, \, \,  \forall q > 0\,,
\end{equation*}
where $\langle n \rangle = \|n\|+1$, with $\|\cdot\|$ denoting the maximum norm on $\mathbb{Z}^d$. While the RAGE Theorem \cite{Oliveira} implies that dynamical localization requires pure point spectrum, the converse is not generally true \cite{SMDTB,DJLS,Germinet}.

In the exponential-scale setting, SULE provides a deterministic framework to guarantee dynamical localization \cite{DJLS}. An operator $H$ exhibits SULE if it possesses an orthonormal basis of eigenfunctions $\{\varphi_m\}_{m \in \mathbb{Z}^d}$ and associated localization centers $j_m \in \mathbb{Z}^d$ such that, for some $\alpha > 0$ and any $\varepsilon > 0$,
\begin{equation*}
|\varphi_m(n)| \leq \gamma_\varepsilon e^{\varepsilon \|j_m\|-\alpha\|n-j_m\|}.
\end{equation*}
If $\varepsilon = 0$ and $j_m = m$ one has the stricter Uniform Localization of Eigenfunctions (ULE), observed in Stark operators \cite{Pigossi}, though generally precluded in ergodic models due to resonances \cite{Damanik,DamanikFillman1,DamanikFillman2}.

For long-range operators with power-law decay, it is natural to expect corresponding power-law dynamical estimates \cite{Aizenman2,Aloisio,Disertori,Sun2,Kerner,Kraisler,Shi2,Shi4,Shi5,Shi6,Sun}. However, a deterministic  mechanism linking power-law eigenfunction decay, the geometry of localization centers, and correlator bounds remains poorly understood compared to probabilistic frameworks relying on disorder.

In this work, we bridge this gap by developing a  non-perturbative abstract framework based on power-law SULE (Definition \ref{SULEdef}). Unlike in the exponential regime, where spatial irregularities of localization centers are absorbed by the rapid decay \cite{DJLS}, polynomial decay cannot suppress the cumulative effects of spatial disorder. Consequently, (uniform) power-law localization is  geometric, that is,  properties such as the asymptotic distribution of localization centers directly dictate eigenfunction correlators and transport estimates.

Our main contributions are as follows:
\begin{enumerate}
\item We establish that power-law localization is structurally intertwined with a suitable notion of power-law SULE (Theorem \ref{mainthm} and Remark \ref{mainremark}). Our results reveal that  power-law localization is intrinsically geometric: the spatial distribution of localization centers directly influences eigenfunction correlators and transport properties (Remark \ref{mainremark} (ii)).
\item We prove the rigidity of localization centers under certain structural assumptions (particularly for some long-range random models), showing they become asymptotically equidistributed (Theorem \ref{mainthm} and Section \ref{aplication}).
\item For Stark-type potentials with sublinear growth, we prove power-law localization without relying on KAM methods or Green's function estimates. 
\end{enumerate}

The remainder of this note is organized as follows. Subsection \ref{subsecmainreult} presents our main results. Section \ref{aplication} details applications, Section \ref{secproof} contains the proof of the main theorem, and Section \ref{proofaplic} handles proofs related to Stark-type potentials.  We denote by $\ell^2(\mathbb{Z}^d)$ and $\ell^\infty(\mathbb{Z}^d)$ the standard sequence spaces with their usual norms. $I$ denotes the identity operator, and $\{\delta_n\}_{n \in \mathbb{Z}^d}$ is the canonical basis in $\ell^2(\mathbb{Z}^d)$. For $m \in \mathbb{Z}^d$, we set $\|m\| := \displaystyle\max_{1 \leq j \leq d} |m_j|$ and denote $\langle m \rangle := \|m\| + 1$.

\subsection{Main Result}\label{subsecmainreult}

\begin{definition}\label{SULEdef}
{\rm Let $H$ be a linear operator on $\ell^2(\mathbb{Z}^d)$, $\alpha>0$ and $\beta \geq 0$. We say that $H$ exhibits $(\alpha,\beta)$-Power-Law SULE if there exists an orthonormal basis $\{\varphi_m\}_{m \in \mathbb{Z}^d}$ of eigenfunctions of $H$ such that, for each $m \in \mathbb{Z}^d$, there exists a localization center $j_m \in \mathbb{Z}^d$ satisfying
\[
|\varphi_m(n)| \le C_{\alpha,\beta}\, \frac{\langle j_m \rangle^{\beta}}{\langle n - j_m \rangle^{\alpha}} \, \,  \text{for all } n \in \mathbb{Z}^d,
\]
where $C_{\alpha,\beta}$ is uniform in $m$ and $n$.}
\end{definition}

We now present our main result.
\begin{theorem}\label{mainthm}
Let \(H\) be a linear operator on \(\ell^2(\mathbb Z^d)\). Assume that \(H\) exhibits
\((\alpha,\beta)\)-Power-Law SULE  and let \(\{j_m\}_{m\in\mathbb Z^d}\) be the associated localization centers.

\begin{enumerate}
\item[\rm(i)] \textbf{Upper density bound for localization centers and power-law localization.}
Assume that
\begin{equation}\label{eqmain1010102}  
 \alpha>\beta+\frac d2 .   
\end{equation}
(a)  Then, for every
\(L\geq 1\), the set
\[
\{m\in\mathbb Z^d:\langle j_m\rangle\leq L\}
\]
is finite and
\[
\limsup_{L\to\infty}
\frac{\#\{m:\langle j_m\rangle\leq L\}}{(2L+1)^d}
\leq 1 .
\]

(b) For every \(\varepsilon>0\), there exists a constant
\(\gamma_{\alpha,\beta,\varepsilon}>0\) such that, for all \(k,n\in\mathbb Z^d\),
\[
\sum_{m\in\mathbb Z^d}|\varphi_m(k)|\,|\varphi_m(n)|
\leq
\gamma_{\alpha,\beta,\varepsilon}
\frac{\min\{\langle k\rangle,\langle n\rangle\}^{\beta+\frac{d}{2}+\varepsilon}}
{\langle n-k\rangle^{\alpha-\beta-\frac{d}{2}-\varepsilon}} .
\]
(c) If, in addition, $H$ is self-adjoint and $\alpha>\beta+d,$ then, for every \(k\in\mathbb Z^d\) and every
\[
0<q<2\alpha-2\beta-2d,
\]
one has
\[
\sup_{t\in\mathbb R}
\sum_{n\in\mathbb Z^d}
\langle n\rangle^q
\left|\langle e^{-itH}\delta_k,\delta_n\rangle\right|^2
<\infty .
\]

\item[\rm(ii)] \textbf{Rigidity and equidistribution of localization centers.}
Assume that
\begin{equation}\label{eqmain1010103}
\alpha>\beta+d.
\end{equation}
Then
\[
\lim_{L\to\infty}
\frac{\#\{m:\langle j_m\rangle\leq L\}}{(2L+1)^d}
=1 .
\]
\end{enumerate}
\end{theorem}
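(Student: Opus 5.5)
The approach rests on two facts: orthonormality of $\{\varphi_m\}$, and completeness of the basis in the form $\sum_m|\varphi_m(n)|^2=1$ for every $n$ (Parseval applied to $\delta_n$).

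\textbf{Part (i)(a).} Fix a box $\Lambda_R=\{n:\|n\|\le R\}$ with $R=L+L^{\theta}$ for some $\theta<1$ chosen below. For $m$ with $\langle j_m\rangle\le L$, the SULE bound gives
\[
\sum_{n\notin\Lambda_R}|\varphi_m(n)|^2\le C L^{2\beta}\sum_{\|n-j_m\|\ge L^\theta}\langle n-j_m\rangle^{-2\alpha}\lesssim L^{2\beta-\theta(2\alpha-d)} .
\]
This is $o(1)$ once $\theta>2\beta/(2\alpha-d)$, and such a $\theta<1$ exists exactly because $\alpha>\beta+d/2$. Hence each such $\varphi_m$ has mass at least $1/2$ in $\Lambda_R$ for large $L$. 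Summing over these $m$ and using $\sum_m|\varphi_m(n)|^2=1$ gives
\[
\#\{m:\langle j_m\rangle\le L\}\,(1-o(1))\le \#\Lambda_R=(2R+1)^d=(2L+1)^d(1+o(1)).
\]
This yields both finiteness and the $\limsup\le1$ bound.

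\textbf{Part (i)(b).} By symmetry, assume $\langle k\rangle\le\langle n\rangle$. Split the sum over $m$ according to whether $\|j_m-k\|\ge\|n-k\|/2$ or $\|j_m-n\|\ge\|n-k\|/2$; one of the two always holds. On the first set I use Cauchy--Schwarz:
\[
\sum|\varphi_m(k)||\varphi_m(n)|\le\Big(\sum_{m:\|j_m-k\|\ge\|n-k\|/2}|\varphi_m(k)|^2\Big)^{1/2}\cdot 1 .
\]
To bound the remaining sum, group the $m$ dyadically by $\langle j_m\rangle\sim 2^\ell$. Part (a) gives at most $C2^{\ell d}$ centers per shell, each contributing $2^{2\ell\beta}\langle k-j_m\rangle^{-2\alpha}$. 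Summing the shells produces a bound of the form $\langle k\rangle^{2\beta+d+2\varepsilon}\langle n-k\rangle^{-2\alpha+2\beta+d+2\varepsilon}$: the factor $\langle j_m\rangle^{\beta}$ is controlled by $\langle k\rangle+\langle k-j_m\rangle$, and the counting costs $d$ powers. Taking the square root gives the claimed exponents. The second set is handled the same way with $n$ in place of $k$; there I write $\langle j_m\rangle\lesssim\langle n\rangle+\langle n-j_m\rangle$. The main obstacle is the case $\langle n\rangle\gg\langle k\rangle$. I would treat it by noting that on this set $\langle n-j_m\rangle\gtrsim\langle n-k\rangle$, so I can trade powers, and the $\varepsilon$ absorbs the logarithmic losses from the dyadic sums.

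\textbf{Part (i)(c).} Expand $\langle e^{-itH}\delta_k,\delta_n\rangle=\sum_m e^{-itE_m}\overline{\varphi_m(k)}\varphi_m(n)$. Its modulus is bounded, uniformly in $t$, by the correlator in (b), which is at most $C_k\langle n-k\rangle^{-(\alpha-\beta-d/2-\varepsilon)}$. The moment sum $\sum_n\langle n\rangle^q\langle n\rangle^{-2(\alpha-\beta-d/2-\varepsilon)}$ converges when $q<2\alpha-2\beta-2d-2\varepsilon$. Choosing $\varepsilon$ small gives the full range $q<2\alpha-2\beta-2d$.

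\textbf{Part (ii).} It remains to prove the lower bound. By completeness,
\[
(2L+1)^d=\sum_{n\in\Lambda_{L}}\sum_m|\varphi_m(n)|^2 .
\]
Split the sum over $m$ into the terms with $\langle j_m\rangle\le L+L^\theta$, which contribute at most $\#\{m:\langle j_m\rangle\le L+L^\theta\}$, and the rest. For the rest, group the $m$ dyadically by the distance $r=\|j_m\|-L$. Part (a) bounds the number of centers with $\langle j_m\rangle\sim L+r$ by roughly $(L+r)^{d-1}r+(L+r)^d$ loosely, so I simply use $\lesssim(L+r)^d$ per dyadic block. Each such center contributes
\[
\sum_{n\in\Lambda_L}|\varphi_m(n)|^2\lesssim (L+r)^{2\beta}\,L^{d-1}\,r^{-2\alpha+1},
\]
using the distance from $j_m$ to the box $\Lambda_L$. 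The resulting total is $o(L^d)$ when $\alpha>\beta+d$ and $\theta$ is close to $1$. Dividing by $(2L+1)^d$ then gives $\liminf\ge1$, and combining with (a) gives the limit. The delicate point is the exponent bookkeeping in this tail, which is why the stronger hypothesis $\alpha>\beta+d$ is needed here.
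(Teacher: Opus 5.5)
Your proposal is correct. Parts (a) and (c) follow the paper's route. In (a), your direct tail bound on the box of radius $L+L^\theta$, with $\theta\in(2\beta/(2\alpha-d),1)$, replaces the paper's H\"older/Young tail lemma and its radius $(1+2\varepsilon)L$; this is simpler. In (b) and (ii) you estimate differently from the paper, and in both places your estimate is the sharper one.

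In (b) the half-distance split and the Cauchy--Schwarz step are the paper's. The paper then uses the multiplicative inequality $\langle j_m\rangle\le 5\langle k\rangle\langle j_m-n\rangle$ together with the summability $\sum_m\langle j_m\rangle^{-d-2\varepsilon}<\infty$, and this is exactly where the $\varepsilon$ enters. Your additive bound plus dyadic counting from (a) involves only geometric series, since $2\alpha-2\beta>d$. There are therefore no logarithmic losses, and your route actually gives the correlator bound with $\varepsilon=0$.

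Your ``main obstacle'' sentence should be made precise. Carrying out your count on the set $\langle j_m-n\rangle\ge\rho:=\frac12\langle n-k\rangle$ yields terms $\langle n\rangle^{a}\rho^{b}$ with $0\le a\le 2\beta+d$ and $a+b=-(2\alpha-2\beta-d)$. The paper's inequality $\langle n\rangle\le\langle k\rangle+2\rho\le 4\langle k\rangle\rho$ then turns each such term into $\lesssim\langle k\rangle^{2\beta+d}\rho^{-(2\alpha-2\beta-d)}$.

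In (ii) the paper bounds the mass of a far eigenfunction inside the box by its whole tail outside a ball of radius comparable to $\langle j_m\rangle$. Summing $\langle j_m\rangle^{2\beta+d-2\alpha}$ over roughly $R^d$ centers per dyadic scale is what forces $\alpha>\beta+d$. You instead exploit the size of the box, with $O(L^{d-1})$ points per distance shell, which is sharper:
\begin{itemize}
\item As written, your bookkeeping needs only $\alpha>\beta+\frac{d+1}{2}$, which coincides with $\alpha>\beta+d$ only when $d=1$.
\item For $\|j_m\|\ge 2L$ you can also use $\sum_{n\in\Lambda_L}\langle n-j_m\rangle^{-2\alpha}\lesssim L^d\langle j_m\rangle^{-2\alpha}$. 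The far part is then $\lesssim L^{2d+2\beta-2\alpha}$.
\item The range $L+L^\theta<\langle j_m\rangle\le 2L$ contributes $\lesssim L^{2d-1+2\beta-\theta(2\alpha-1)}$.
\end{itemize}
Both contributions are $o(L^d)$ under $\alpha>\beta+\frac d2$ alone, with $\theta$ close to $1$. So your closing remark that $\alpha>\beta+d$ is needed is not accurate for your own argument.

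What the paper's route buys is brevity: one tail lemma and one summability statement do all the work. The cost is the $\varepsilon$ in (b) and the stronger hypothesis in (ii).
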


\begin{remark}\label{mainremark}
\end{remark}

\begin{enumerate}
\item[(i)] \emph{Equivalence.} Assume that there exists a family of functions $\{\varphi_j\}_{j \in \mathbb{Z}^d}$ satisfying the correlator bound $\sum_{j} |\varphi_j(k)|\,|\varphi_j(n)| \le \gamma_{\alpha,\beta} \langle k \rangle^{\beta} / \langle n-k \rangle^{\alpha}$. By choosing a suitable center $j_m\in\mathbb Z^d$, one immediately recovers power-law SULE, i.e., for every $\varepsilon>0$ there exists a constant $C_{\alpha,\beta,\varepsilon}>0$ such that
\[
|\varphi_m(n)| \le C_{\alpha,\beta,\varepsilon}\, \frac{\langle j_m \rangle^{\beta+\frac{d}{2}+\varepsilon}}{\langle n-j_m \rangle^{\alpha}}.
\]
Combined with Theorem \ref{mainthm}(ii), this establishes an equivalence: power-law SULE is an unavoidable deterministic mechanism governing power-law localization.

\item[(ii)] \emph{H\"older exponents and sharpness.}  The conditions in \eqref{eqmain1010102} and \eqref{eqmain1010103} indicate that a larger semi-uniform decay exponent $\alpha$ (relative to dimension $d$) permits a larger error exponent $\beta$ for spatial disorder. This delicate interplay between decay and geometry has no parallel in the exponential case \cite{Aloisio,Sun2}. We note that the examples discussed in~\cite{Aizenman,deMoura,Disertori} seem to suggest that the condition \(\alpha>\beta+\frac{d}{2}\) in \eqref{eqmain1010102} may be optimal.

\item[(iii)] The proof of Theorem \ref{mainthm} relies entirely on orthogonality identities and volumetric counting in $\mathbb{Z}^d$ to establish the equidistribution of centers. A key ingredient is the use of conjugate H\"older exponents \(p\) and \(p'\) to balance the geometric error exponent \(\beta\) against the spatial decay exponent \(\alpha\). This structural control enables the spatial decompositions required for the correlator bounds, which subsequently yield the uniform dynamical moments.
\end{enumerate}


\section{Applications}\label{aplication}

\subsection{Long-range Stark-type lattice operators with sublinear growth}

Let $a \in \ell^1_r(\mathbb{Z})$ ($r \geq 0$), i.e., $\|a\|_r = \sum_{m \in \mathbb{Z}} |a(m)||m|^r < \infty$, and suppose that $a(0) = 0$ and $a(m) = a^*(-m)$ for all $m \in \mathbb{Z}$. Define, for each $\eta>0$,
\[
(T_\eta u)(n) = \sum_{m \in \mathbb{Z}} a(n-m) u(m) + \operatorname{sgn}(n)|n|^\eta u(n),
\]
on the domain
\[
\text{dom } T_\eta := \left\{u\in \ell^2(\mathbb{Z}) : \sum_{n\in \mathbb{Z}}|n|^{2\eta}|u(n)|^2 < \infty\right\}.
\]

In contrast with the linear potential case $\eta=1$ studied in \cite{Aloisio}, the present setting allows for accumulation of eigenvalues at infinity for $0<\eta<1$, leading to the presence of quasi-resonances. In this case, while the linear growth model studied in \cite{Aloisio} exhibits a power-law ULE structure, the present model only admits a power-law SULE structure.

\begin{theorem} \label{thmperturbation}
Let $0<\eta<1$ and let $T_\eta$ be as above and consider $T_\eta + b$, where $b \in \ell^\infty(\mathbb{Z}, \mathbb{R})$. Then:
\begin{enumerate}
\item[{\rm (i)}] $T_\eta + b$ has purely discrete spectrum and there exists $\gamma > 0$ such that  the eigenvalues satisfy
\[
\bigl|\lambda_n - \operatorname{sgn}(n)\,|n|^\eta\bigr| \le \gamma, \, \,  n \in \mathbb{Z}.
\]
\item[{\rm (ii)}] If $a \in \ell_r^1(\mathbb{Z})$, then any orthonormal basis of eigenvectors $\{\varphi_m\}$ satisfies
\[
|\varphi_m(n)| \le \gamma_{r,\eta}\, \frac{\langle m \rangle^{(r+1)(1-\eta)}}{\langle m - n \rangle^{(r+1)\eta}}.
\]
\item[{\rm (iii)}] For every $k \in \mathbb{Z}$ and for all $0 < q < (r+1)(4\eta - 2) - 2$,
\[
\sup_{t \in \mathbb{R}} \sum_{n \in \mathbb{Z}} \langle n\rangle^q \, 
\big|\langle e^{-it(T_\eta + b)}\delta_k, \delta_n \rangle\big|^2 < \infty.
\]
\end{enumerate}
\end{theorem}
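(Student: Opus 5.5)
For (i), I would treat $T_\eta+b$ as the multiplication operator $V_\eta(n)=\operatorname{sgn}(n)|n|^\eta$ perturbed by the bounded operator $A+b$. Here $A$ is the Laurent (convolution) operator with symbol $\hat a$. It satisfies $\|A\|\le\|a\|_0$, and it is self-adjoint because $a(m)=a^*(-m)$. Since $|V_\eta(n)|\to\infty$, $V_\eta$ has compact resolvent, and a bounded self-adjoint perturbation preserves this, so the spectrum is purely discrete. To obtain $|\lambda_n-V_\eta(n)|\le\gamma$ with $\gamma=\|a\|_0+\|b\|_\infty$, I would order the eigenvalues increasingly and label them so that they match the increasing sequence $V_\eta(n)$, $n\in\mathbb Z$. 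The min-max principle (Weyl's inequality) then shows the counting functions of $V_\eta$ and $T_\eta+b$ differ by at most a shift of $\gamma$. Because $V_\eta$ is two-sided unbounded, the labeling needs care. I would fix the index by comparing eigenvalue counts in windows $[-E,E]$ via the Riesz projections, using a norm-continuous deformation $t\mapsto V_\eta+t(A+b)$, $t\in[0,1]$, so that the eigenvalue branches are tracked with displacement at most $t\gamma$.

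For (ii), I would write the eigenvalue equation as $(V_\eta(n)-\lambda_m)\varphi_m(n)=-(A\varphi_m)(n)-b(n)\varphi_m(n)$. Using (i), $|V_\eta(n)-\lambda_m|\ge |V_\eta(n)-V_\eta(m)|-\gamma$. By the mean value theorem with $\eta<1$, $|V_\eta(n)-V_\eta(m)|\gtrsim |n-m|\,\langle m\rangle^{\eta-1}$ when $|n-m|\lesssim\langle m\rangle$, and $\gtrsim|n-m|^\eta$ in general. This gives $|V_\eta(n)-\lambda_m|\gtrsim \langle n-m\rangle^{\eta}\langle m\rangle^{\eta-1}$ once $\langle n-m\rangle$ exceeds a constant multiple of $\langle m\rangle^{1-\eta}$; this threshold is where the factor $\langle m\rangle^{1-\eta}$ comes from.

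I would then bootstrap. Set $w_s(n)=\langle n-m\rangle^{s}$ and multiply the equation by $w_s$. The commutator $[w_s,A]$ is controlled by $\ell^1_r$-moments of $a$ through $\langle n-m\rangle^s\le C(\langle n-k\rangle^s+\langle k-m\rangle^s)$, where $s\le r$ ensures $\sum_k |a(n-k)|\langle n-k\rangle^s<\infty$. Each iteration gains a factor $\langle n-m\rangle^{-\eta}\langle m\rangle^{1-\eta}$, which is at least as good as a factor $(\langle m\rangle^{1-\eta}/\langle n-m\rangle^{\eta})$. Performing $r+1$ iterations from $|\varphi_m|\le1$ yields $|\varphi_m(n)|\lesssim \langle m\rangle^{(r+1)(1-\eta)}\langle m-n\rangle^{-(r+1)\eta}$. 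In the region where $\langle n-m\rangle\lesssim \langle m\rangle^{1-\eta}$ the bound is trivial anyway, since the right side is then at least about $1$. Non-integer $r$ would be handled by interpolating the last step.

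The main obstacle is making this iteration rigorous with only the moment hypothesis on $a$, rather than pointwise decay. I would first try $\ell^\infty$ weighted estimates, using the bound $\sum_k|a(n-k)|\,|\varphi_m(k)|\langle k-m\rangle^{s}\le \|a\|_s\sup_k |\varphi_m(k)|\langle k-m\rangle^{s}$ together with the triangle-type splitting of weights.

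For (iii), I would apply Theorem \ref{mainthm}(i)(c) with $d=1$, $j_m=m$, $\alpha=(r+1)\eta$, $\beta=(r+1)(1-\eta)$. The admissible range $q<2\alpha-2\beta-2d=(r+1)(4\eta-2)-2$ is exactly the stated one. When this range is empty the statement is vacuous; otherwise $\alpha>\beta+d$ holds automatically.
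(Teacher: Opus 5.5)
Your proposal is correct up to the small points noted at the end. Its architecture matches the paper's. Part (iii) is Theorem \ref{mainthm}(i)(c) with $d=1$, $j_m=m$, $\alpha=(r+1)\eta$, $\beta=(r+1)(1-\eta)$, exactly as in the paper. Part (ii) is a bootstrap from $|\varphi_m|\le1$, driven by the eigenvalue equation and the lower bound $|V_\eta(n)-V_\eta(m)|\gtrsim\langle m-n\rangle^{\eta}\langle m\rangle^{\eta-1}$, which is the paper's Lemma \ref{teclemma004}.

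The genuine difference lies in how the bootstrap step works. Write $\rho(n)=\langle m\rangle^{1-\eta}\langle m-n\rangle^{-\eta}$. The paper compares $\rho(n-k)$ with $\rho(n)$ through the multiplicative inequality $|m-n|\le|m-n+k|\,|k|$, which is valid only when both factors are at least $2$. This buys a one-line factorization $\rho(n-k)^{r+1}\lesssim|k|^{(r+1)\eta}\rho(n)^{r+1}$, but it has three costs:
\begin{itemize}
\item separate boundary terms ($|k|\le1$ or $|m-n+k|\le1$), controlled by pointwise decay of $a$;
\item the moment $\|a\|_{r+1}$ in the step producing $\rho^{r+2}$;
\item an extra argument (Case 2, splitting at $|k|=R/2$) to start at a fractional exponent.
\end{itemize}
Your additive splitting $\langle n-m\rangle^{s}\lesssim\langle k\rangle^{s}+\langle n-k-m\rangle^{s}$ in a weighted sup norm treats all $k$ at once. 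Suppose $|\varphi_m|\le K_t\rho^{t}$. On the set where $|V_\eta(n)-V_\eta(m)|$ exceeds a large multiple of $\gamma$,
\[
\langle n-m\rangle^{t\eta}|\varphi_m(n)|\le C\rho(n)\sum_{k}|a(k)|\bigl(\langle k\rangle^{t\eta}+\langle n-k-m\rangle^{t\eta}\bigr)|\varphi_m(n-k)|\le C\rho(n)\bigl(\|a\|_{t\eta}+K_t\|a\|_0\langle m\rangle^{t(1-\eta)}\bigr).
\]
Hence $|\varphi_m|\lesssim\rho^{t+1}$ there. Off that set, $\rho\gtrsim1$ by Lemma \ref{teclemma004} and the bound is trivial. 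No integrality of $t$ is used.

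For $r=N+\theta$, your ``interpolating the last step'' can be made concrete by starting at $t=\theta$. The first step gives $|\varphi_m|\le\min(1,K_1\rho)\lesssim\rho^{\theta}$, and $N+1$ further steps give $\rho^{r+1}$. Since the last step only uses $\|a\|_{r\eta}$, your route proves (ii) under the weaker hypothesis $a\in\ell^1_{r\eta}$.

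Two small corrections.

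First, the claim $|V_\eta(n)-V_\eta(m)|\gtrsim|n-m|^\eta$ ``in general'' is false. For $m=N$, $n=N+1$ the left side is about $\eta N^{\eta-1}\to0$. It holds only for opposite signs or when $|n-m|\gtrsim\langle m\rangle$. The combined bound you actually use is nonetheless right.

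Second, for (i), as you note, min-max does not apply to an operator unbounded in both directions. Tracking windows with Riesz projections is awkward here because the gaps $V_\eta(n+1)-V_\eta(n)\sim\eta|n|^{\eta-1}$ eventually drop below $\gamma$. The clean version of your deformation argument is Rellich's theorem applied to the self-adjoint holomorphic family of type (A), $t\mapsto V_\eta+t(A+b)$, which has compact resolvent. Its complete system of real-analytic eigenvalue branches starts at the simple eigenvalues $V_\eta(n)$ and satisfies $|\lambda_n'(t)|=|\langle(A+b)\psi_n(t),\psi_n(t)\rangle|\le\|a\|_0+\|b\|_\infty$. The paper does not reprove (i); it quotes Theorem \ref{assintopticsthm} from earlier work.
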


The proof follows the general strategy of \cite{Aloisio}, relying on spectral asymptotics rather than KAM-type arguments or Green's function techniques (see Section \ref{proofaplic} ahead). Next, we discuss a specific example.

\subsubsection{Fractional Laplacian with sublinear Stark potential}

Let $0<s<1$ and $0<\eta<1$. Consider the one-dimensional operator
\[
H = \Delta^s + \operatorname{sgn}(\cdot)\,|\cdot|^\eta + b
\]
acting on $\ell^2(\mathbb{Z})$, where $\Delta^s$ denotes the discrete fractional Laplacian of order $s$ \cite{Gebert} and $b \in \ell^\infty(\mathbb{Z},\mathbb{R})$ is an arbitrary bounded perturbation.

In this case, recall that we can write $(\Delta^s u)(n) = \sum_{m \in \mathbb{Z}} a(n-m) u(m), n \in \mathbb{Z},$ with $a \in \ell^1_{r-\epsilon}(\mathbb{Z})$ for all $\epsilon>0$, where $r = 2s$ \cite{Gebert}. Theorem~\ref{thmperturbation} implies that the second moment ($q=2$) is finite provided
\[
s > \displaystyle\frac{1}{2} \, \,  \text{and} \, \,  
\eta > \frac{1}{2} + \frac{1}{(2s+1)}.
\]

The spectral structure described in Theorem \ref{thmperturbation} suggests the presence of quasi-resonances whenever $0<\eta<1$. Indeed, in this case, the eigenvalues satisfy
\[
|\lambda_{n+1}-\lambda_n|\sim |n|^{\eta-1},
\]
so that consecutive eigenvalues become asymptotically clustered at high energies.

From a heuristic perspective, this accumulation mechanism may be related to a transition between localization and transport regimes in the $(s,\eta)$-plane, similarly to what is observed numerically in some long-range Anderson-type models \cite{deMoura}.

\subsection{Long-range hopping random operators}

We revisit the study of power-law localization for long-range hopping random operators considered in \cite{Aizenman,Disertori,Sun2}. The main novelty here concerns the spatial distribution of localization centers. More precisely, we show that the localization centers associated with power-law localized eigenfunctions satisfy a strong asymptotic equidistribution property (see~(\ref{eqDistriCentRand})), ruling out large-scale concentration phenomena and persistent gaps. For simplicity, we restrict to the one-dimensional case, although the discussion extends to higher dimensions.

Let $r > 2$ and let $H_\omega$ be a random operator on $\ell^2(\mathbb{Z})$ of the form
\[
H_\omega = \lambda^{-1} T_a + V_\omega,
\]
where $T_a$ is defined by $(T_au)(n)=\sum_{m \in \mathbb{Z}} a(n-m) u(m)$ with
\[
a(m-n) = |m-n|^{-r} \, \,  \text{for } m \neq n, \, \,  \text{and } a(0) = 0.
\]
The potential $V_\omega(n) = \omega_n$ is a sequence of i.i.d.\ random variables in $[0,1]^{\mathbb{Z}}$ with a Lipschitz common distribution.

It was shown in \cite{Aizenman,Disertori,Sun2} that for every sufficiently large $r$ and sufficiently large disorder $\lambda$, for almost every $\omega$, the operator $H_\omega$ has pure point spectrum and admits an orthonormal basis of eigenfunctions $\{\varphi_m^\omega\}_{m \in \mathbb{Z}}$ with localization centers $j_m^\omega \in \mathbb{Z}$ satisfying
\[
|\varphi_m^\omega(n)| \leq C_{\omega,r} \frac{\langle j_m^\omega \rangle^\beta}{\langle n - j_m^\omega \rangle^\alpha}, \, \,  n \in \mathbb{Z},
\]
for some $\alpha(r),\beta(r)>0$. In particular, the operator exhibits $(\alpha,\beta)$-Power-Law SULE. Under this assumption, Theorem~\ref{mainthm} yields that for every $k$ and for all 
\begin{equation}\label{eq056565}
0 < q < 2 \alpha - 2\beta-2,    
\end{equation}
the corresponding dynamical moment is uniformly bounded,
\[
\sup_{t \in \mathbb{R}} \sum_{n \in \mathbb{Z}} \langle n \rangle^q \, 
\big| \langle e^{-itH_\omega} \delta_k, \delta_n \rangle \big|^2 < \infty.
\]
We note that, in many cases of interest, one has $2 \alpha - 2\beta-2>0,$ see \cite{Aizenman,Disertori,Sun2}. In particular, condition
\eqref{eqmain1010103} is satisfied. Although the boundedness of such dynamical moments is already known in this setting, it is worth emphasizing that, under the same assumptions on \(r\), our result yields a larger range of finite moments than the one obtained in \cite{Sun2}. Moreover, our approach applies to situations which are more general than those covered in \cite{Aizenman,Disertori}.

Regarding the localization centers $\{j_m^\omega\}_{m \in \mathbb{Z}}$, randomness allows for local fluctuations. However, Theorem~\ref{mainthm} imposes constraints on their global distribution, implying that they are asymptotically equidistributed, in the sense that
\begin{equation}  \label{eqDistriCentRand}
\lim_{L \to \infty} \frac{\#\{m : \langle j_m^\omega \rangle \leq L\}}{2L+1} = 1.
\end{equation}
This shows that large-scale concentration or persistent gaps in the distribution of centers are ruled out.


\section{Proof of Theorem \ref{mainthm}}\label{secproof}

Next, we prove Theorem \ref{mainthm}. To this end, some preliminary results are required. The next lemma shows that, under the power-law SULE assumption, the mass of an eigenfunction outside a sufficiently large neighborhood of its localization center is small and exhibits power-law decay.

Since $\langle j_m\rangle \geq 1$, if $H$ satisfies $(\alpha,\beta')$-Power-Law SULE with $\beta'=0$, then $H$ also satisfies $(\alpha,\beta)$-Power-Law SULE for all $\beta>0$. Therefore, throughout this section, we may assume without loss of generality that $\beta > 0$.

\begin{lemma} \label{lema7.2} Let $\alpha>\beta + \displaystyle\frac{d}{2}$ and let $p\in\left(\frac{\alpha}{\alpha-\beta},\frac{2\alpha}{d}\right)$ and  $p'$ its conjugate exponent. Suppose that $H$ exhibits $(\alpha,\beta)$-Power-Law SULE. Then, for every $\varepsilon>0$, there exists a constant $D_{\varepsilon,\alpha,\beta}$ such that, for each sufficiently large $r>1$, each $m \in \mathbb{Z}^d$ and each $L \geq 1$,
\[
\sum_{\langle n - j_m \rangle \,\geq\, \varepsilon (\langle j_m \rangle + L)} 
|\varphi_m(n)|^2 
\;\leq\; 
D_{\varepsilon,\alpha,\beta} \, L^{-2\alpha/ r p'}\, \langle j_m \rangle^{-(2\alpha/p)+d}.
\]
\end{lemma}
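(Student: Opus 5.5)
The plan is a direct tail estimate from the pointwise bound in Definition~\ref{SULEdef}. The target exponent $2\alpha-d$ will be split between the factor $\langle j_m\rangle$ and the factor $L$, using $p$ and $p'$. Write $R:=\varepsilon(\langle j_m\rangle+L)$ and note that $R\ge 2\varepsilon$. By the $(\alpha,\beta)$-Power-Law SULE bound,
\[
\sum_{\langle n-j_m\rangle\ge R}|\varphi_m(n)|^2\le C_{\alpha,\beta}^2\,\langle j_m\rangle^{2\beta}\sum_{\langle n-j_m\rangle\ge R}\langle n-j_m\rangle^{-2\alpha}.
\]
The hypotheses give $2\alpha>2\beta+d>d$. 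Counting lattice points in shells $\{\langle u\rangle=\ell\}$, which have cardinality $\lesssim \ell^{d-1}$, yields $\sum_{\langle u\rangle\ge R}\langle u\rangle^{-2\alpha}\le C_{\alpha,d}\,\max\{R,1\}^{d-2\alpha}$. Since $R\ge 2\varepsilon$, this is at most $C_{\varepsilon,\alpha,d}R^{d-2\alpha}$; the constant absorbs the regime $R<1$.

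Next I split $2\alpha-d=a+b$ with
\[
a:=\frac{2\alpha}{p}-d+2\beta,\qquad b:=2\alpha-\frac{2\alpha}{p}-2\beta=\frac{2\alpha}{p'}-2\beta .
\]
The condition $p<2\alpha/d$ gives $2\alpha/p>d$, hence $a>0$. The condition $p>\alpha/(\alpha-\beta)$ is equivalent to $1/p'=1-1/p>\beta/\alpha$, hence $b>0$. Because $R\ge\varepsilon\langle j_m\rangle$ and $R\ge\varepsilon L$, we get
\[
R^{-(2\alpha-d)}=R^{-a}R^{-b}\le \varepsilon^{-a-b}\,\langle j_m\rangle^{-a}L^{-b}.
\]
Multiplying by $\langle j_m\rangle^{2\beta}$ gives $\langle j_m\rangle^{2\beta-a}=\langle j_m\rangle^{-(2\alpha/p)+d}$, which is exactly the desired power of $\langle j_m\rangle$.

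It remains to replace $L^{-b}$ by $L^{-2\alpha/(rp')}$. Since $L\ge1$, this only requires $b\ge 2\alpha/(rp')$, that is,
\[
\frac{2\alpha}{p'}\Bigl(1-\frac1r\Bigr)\ge 2\beta .
\]
Since $2\alpha/p'>2\beta$ strictly, this holds for all $r\ge r_0:=\bigl(1-\beta p'/\alpha\bigr)^{-1}$. This is precisely the meaning of ``sufficiently large $r$''. Collecting constants gives $D_{\varepsilon,\alpha,\beta}=C_{\alpha,\beta}^2C_{\varepsilon,\alpha,d}\,\varepsilon^{-(2\alpha-d)}$, which is uniform in $m$ and $L$.

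The only delicate point is the bookkeeping of the exponent split: both pieces must be positive, and that is exactly where the two endpoint conditions on $p$ enter. The lattice tail sum and the small-$R$ regime are routine, handled by the shell-counting bound and by absorbing constants depending on $\varepsilon$.
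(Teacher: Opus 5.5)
Your proof is correct, but it is organized differently from the paper's.

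The paper works pointwise before summing. It writes $\langle n-j_m\rangle=\langle n-j_m\rangle^{1/p}\langle n-j_m\rangle^{1/p'}$ and introduces an auxiliary exponent $1<r'<\alpha/(\beta p')$ with $1/r+1/r'=1$. It then uses Young's inequality $\langle j_m\rangle+L\ge\langle j_m\rangle^{1/r'}L^{1/r}$, so that the $1/p'$ share of the decay cancels $\langle j_m\rangle^{\beta}$ and produces the factor $L^{-\alpha/(rp')}$. Only afterwards does it sum the leftover $\langle n-j_m\rangle^{-2\alpha/p}$ over the enlarged region $\langle n-j_m\rangle\ge\varepsilon\langle j_m\rangle$, and this is where $2\alpha/p>d$ enters.

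You instead sum the full tail first, obtaining $R^{d-2\alpha}$ with $R=\varepsilon(\langle j_m\rangle+L)$. You then split $2\alpha-d=a+b$ additively, so the two endpoint conditions on $p$ appear directly as $a>0$ and $b>0$. This avoids Young's inequality and the auxiliary $r'$. It also shows that one actually gets the factor $L^{-(2\alpha/p'-2\beta)}$, i.e.\ the endpoint $r=r_0=(1-\beta p'/\alpha)^{-1}$, whereas the paper's strict choice $r'<\alpha/(\beta p')$ only yields $r>r_0$.

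What the paper's route buys is an intermediate pointwise bound on the far region, uniform in $m$: $|\varphi_m(n)|\lesssim \varepsilon^{-\alpha/p'}L^{-\alpha/(rp')}\langle n-j_m\rangle^{-\alpha/p}$. This fits the H\"older-exponent balancing the authors emphasize, although nothing later in the paper uses it beyond this lemma.
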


\begin{proof} Since $\alpha>\beta+\frac d2$, we have
\[
\frac{\alpha}{\alpha-\beta}<\frac{2\alpha}{d}.
\]
Hence, we may choose
\[
p\in\left(\frac{\alpha}{\alpha-\beta},\frac{2\alpha}{d}\right),
\]
and let $p'$ be its conjugate exponent. Then
\[
\alpha>\frac{pd}{2}
\, \, \text{and}\, \, 
\beta<\alpha\left(1-\frac1p\right)=\frac{\alpha}{p'}.
\]

By the Power-Law SULE condition, for all $m,n \in \mathbb{Z}^d$, 
\begin{equation}\label{eq01010101}
|\varphi_m(n)| \;\leq\; C_{\alpha,\beta} \, 
\frac{\langle j_m \rangle^\beta}{\langle n - j_m \rangle^\alpha}.    
\end{equation}

Since $\beta < \alpha/p'$, one has $\alpha/(\beta p')>1$.  Let $1<r'<\alpha/(\beta p')$ and let $r>1$ such that $1/r+1/r'=1$. Note that: $\alpha/(r'p') - \beta > 0$.

Assume that $\langle n - j_m \rangle\geq \varepsilon(\langle  j_m \rangle+L)$. Using the Young's inequality $a^{r'} + b^{r} \geq ab$ for $a,b \geq 0$, with $a=\langle j_m \rangle^{1/r'}$ and $b=L^{1/r}$, we have $\langle j_m \rangle+L \geq \langle j_m \rangle^{1/r'} L^{1/r}$. Thus,
\begin{align*}
\langle n - j_m \rangle 
&= \langle n - j_m \rangle^{1/p} \langle n - j_m \rangle^{1/p'} \\ 
&\geq \varepsilon^{1/p'} \langle n - j_m \rangle^{1/p} \, (\langle j_m \rangle+L)^{1/p'} \\ 
&\geq \varepsilon^{1/p'} \langle n - j_m \rangle^{1/p} \, \langle j_m \rangle^{1/(r'p')} L^{1/(rp')}.
\end{align*}
Since  $\alpha/(r'p') - \beta > 0$, substituting this lower bound into \eqref{eq01010101} yields
\[
|\varphi_m(n)| \;\leq\; 
\frac{C_{\alpha,\beta}}{\langle n - j_m \rangle^{\alpha/p} (L^{1/r} \cdot \varepsilon)^{\alpha/p'}}.
\]
Squaring this expression and summing over all $n$ such that $\langle n - j_m \rangle\geq \varepsilon(\langle  j_m \rangle+L)$, we obtain
\[
\sum_{\langle n - j_m \rangle \geq \varepsilon(\langle  j_m \rangle+L)} |\varphi_m(n)|^2
\;\leq\; C_{\alpha,\beta}^2 \, L^{-2\alpha/rp'}\, 
\varepsilon^{-2\alpha/p'}
\sum_{\langle n - j_m \rangle\geq \varepsilon(\langle j_m \rangle+L)} \frac{1}{\langle n - j_m \rangle^{2\alpha/p}}.
\]
Writing $k = n - j_m$ and using the fact that $L \geq 1$, we can bound the sum by
\begin{align*}
\sum_{\langle n - j_m \rangle \geq \varepsilon(\langle j_m \rangle+L)} |\varphi_m(n)|^2
&\leq C_{\alpha,\beta}^2 \, L^{-2\alpha/rp'}\, \varepsilon^{-2\alpha/p'}
\sum_{\langle k \rangle\geq \varepsilon\langle j_m \rangle} \frac{1}{\langle k \rangle^{2\alpha/p}}\\ 
&\leq D_{\varepsilon,\alpha,\beta} \, L^{-2\alpha/rp'}\, \langle j_m \rangle^{-(2\alpha/p)+d},  
\end{align*}
for some constant $D_{\varepsilon,\alpha,\beta}$. The last inequality holds because the sum converges and is bounded by an integral of order $R^{-(2\alpha/p)+d}$, which is finite since $\alpha > pd/2$. This proves the lemma.
\end{proof}

\subsection{Proof of Theorem \ref{mainthm} (i) (a)}

Since Theorem~1.1 (i) (a) will be used throughout the proof, we restate it as the following lemma, which we prove below.

\begin{lemma}\label{lemma3}
Let $\alpha > \beta + \displaystyle\frac{d}{2}$. Suppose that $H$ exhibits $(\alpha,\beta)$-Power-Law SULE. Then, for each $L \geq 1$, the set 
\[
\{ m : \langle j_m \rangle \leq L \}
\]
is finite, and its cardinality satisfies
\[
\limsup_{L\to\infty}
\frac{\#\{m:\langle j_m \rangle\le L\}}{(2L+1)^d}
\ \le\ 1.
\]
\end{lemma}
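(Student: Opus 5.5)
The plan is a mass-counting argument based on orthonormality. Fix $L\ge1$ and a small $\varepsilon>0$, and let $B_R=\{n:\langle n\rangle\le R\}$, a box with $(2R-1)^d\le (2R+1)^d$ lattice points. Since $\{\varphi_m\}$ is an orthonormal basis, Parseval applied to each $\delta_n$ gives $\sum_m|\varphi_m(n)|^2=1$. Hence, for any finite set $F$ of indices,
\[
\sum_{m\in F}\sum_{n\in B_R}|\varphi_m(n)|^2\le \#B_R .
\]
The strategy is to show that every eigenfunction with $\langle j_m\rangle\le L$ carries almost all of its mass inside a box $B_R$ with $R=(1+\varepsilon)L$ up to a slight enlargement. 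Summing over such $m$ then bounds their number by roughly $(2(1+2\varepsilon)L+1)^d$.

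For $m$ with $\langle j_m\rangle\le L$, the triangle inequality $\langle n\rangle\le\langle n-j_m\rangle+\langle j_m\rangle$ shows that any $n\notin B_{R}$ with $R=\langle j_m\rangle+\varepsilon(\langle j_m\rangle+L)\le(1+2\varepsilon)L$ satisfies $\langle n-j_m\rangle\ge\varepsilon(\langle j_m\rangle+L)$. Lemma \ref{lema7.2} then gives
\[
\sum_{n\notin B_{(1+2\varepsilon)L}}|\varphi_m(n)|^2\le D_{\varepsilon,\alpha,\beta}L^{-2\alpha/rp'}\langle j_m\rangle^{d-2\alpha/p}\le D_{\varepsilon,\alpha,\beta}L^{-2\alpha/rp'},
\]
since $2\alpha/p>d$. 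The right side is uniform in $m$ and tends to $0$ as $L\to\infty$; this uniformity is exactly what the $L^{-2\alpha/rp'}$ factor provides. Choose $L_0$ such that this quantity is at most $\varepsilon$ for $L\ge L_0$.

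For finiteness, take any finite subset $F\subset\{m:\langle j_m\rangle\le L\}$. Each $m\in F$ has $\sum_{n\in B_{(1+2\varepsilon)L'}}|\varphi_m(n)|^2\ge 1-\varepsilon$ for $L'=\max(L,L_0)$, because $\langle j_m\rangle\le L\le L'$. Summing over $F$ gives $(1-\varepsilon)\#F\le (2(1+2\varepsilon)L'+1)^d$. The cardinality of every finite subset is therefore bounded, so the set is finite. The same inequality with $L\ge L_0$ gives
\[
\frac{\#\{m:\langle j_m\rangle\le L\}}{(2L+1)^d}\le\frac{(2(1+2\varepsilon)L+1)^d}{(1-\varepsilon)(2L+1)^d}.
\]
Taking $\limsup_{L\to\infty}$ yields $(1+2\varepsilon)^d/(1-\varepsilon)$, and letting $\varepsilon\to0$ gives the claim.

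The main obstacle is making the tail bound uniform in $m$ and in $\langle j_m\rangle\le L$: a naive SULE tail bound grows like $\langle j_m\rangle^{\beta}$. Lemma \ref{lema7.2} is designed to overcome exactly this. One small point to check is that the lemma is stated for "sufficiently large $r$", which means fixing one admissible $r$ (and $p$) once and for all; this is harmless. Since Parseval is used only through $\sum_m|\varphi_m(n)|^2=1$, no self-adjointness of $H$ is needed.
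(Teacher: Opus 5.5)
Your proposal is correct and follows the paper's proof essentially step by step. It uses the same triangle-inequality reduction of the exterior of the box of radius $(1+2\varepsilon)L$ to the tail region of Lemma~\ref{lema7.2}, the same uniform bound $D_{\varepsilon,\alpha,\beta}L^{-2\alpha/rp'}$, and the same exchange of summation via $\sum_m|\varphi_m(n)|^2=1$. Your treatment of finiteness, passing to finite subsets and to $L'=\max(L,L_0)$, is slightly more careful than the paper's, which implicitly needs $L$ large enough that $1-D_{\varepsilon,\alpha,\beta}L^{-2\alpha/rp'}>0$.
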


\begin{proof} 
Recall the two basic orthogonality identities for the orthonormal basis $\{\varphi_m\}$:
\begin{equation}\label{eq:72a}
\sum_{n\in\mathbb Z^d} |\varphi_m(n)|^2 = 1 \, \,  \text{for each $m \in\mathbb Z^d$}, 
\end{equation}
and
\begin{equation}\label{eq:72b}
\sum_{m\in\mathbb Z^d} |\varphi_m(n)|^2 = 1 \, \,  \text{for each $n\in\mathbb Z^d$}. 
\end{equation}

Let $\varepsilon>0$ and assume that $\langle j_m \rangle\le L$. If $\langle n \rangle\ge (1+2\varepsilon)L$, then by the triangle inequality,
\[
\langle n - j_m \rangle \ge \langle n \rangle - \langle j_m \rangle \ge (1+2\varepsilon)L - L = 2\varepsilon L \ge \varepsilon(\langle j_m \rangle+L).
\]
By Lemma \ref{lema7.2}, it follows that, for some sufficiently large $r>1$,
\begin{eqnarray*}
\sum_{\langle n \rangle\ge (1+2\varepsilon)L} |\varphi_m(n)|^2 &\leq& \sum_{\langle n - j_m \rangle\ge \varepsilon(\langle j_m \rangle+L)} |\varphi_m(n)|^2 \leq D_{\varepsilon,\alpha,\beta}\,L^{-2\alpha/rp'}\,\langle  j_m \rangle^{-(2\alpha/p)+d}\\ &\leq& D_{\varepsilon,\alpha,\beta}\,L^{-2\alpha/rp'},    
\end{eqnarray*}
where we used $\langle j_m \rangle \geq 1$ and $-(2\alpha/p)+d < 0$. Using \eqref{eq:72a}, it follows that the mass inside the ball is bounded below by
\begin{equation*}
\sum_{\langle n  \rangle\le (1+2\varepsilon)L} |\varphi_m(n)|^2 
\ \ge\ 1 - D_{\varepsilon,\alpha,\beta}\,L^{-2\alpha/rp'}.
\end{equation*}
We now sum over all $m$ such that $\langle j_m \rangle\le L$. Using \eqref{eq:72b} to exchange the order of summation, we get
\begin{align*}
(2(1+2\varepsilon)L+1)^d 
&\geq \sum_{\langle n \rangle\le (1+2\varepsilon)L} 1 \\
&= \sum_{\langle n \rangle\le (1+2\varepsilon)L} \sum_{m \in \mathbb{Z}^d} |\varphi_m(n)|^2\\
&\geq \sum_{\langle n \rangle\le (1+2\varepsilon)L} \sum_{\{m:\langle j_m \rangle\le L\}} |\varphi_m(n)|^2\\
&= \sum_{\{m:\langle j_m \rangle\le L\}} \sum_{\langle n \rangle\le (1+2\varepsilon)L} |\varphi_m(n)|^2\\
&\geq \#\{m:\langle j_m \rangle\le L\}\,\big(1-D_{\varepsilon,\alpha,\beta} L^{-2\alpha/rp'}\big).    
\end{align*}
Since the left-hand side is finite, $\#\{m:\langle j_m \rangle\le L\}$ is finite. Dividing by $(2L+1)^d$ and taking the limit superior yields
\begin{equation*}
\limsup_{L\to\infty}
\frac{\#\{m:\langle j_m \rangle\le L\}}{(2L+1)^d}
\ \le\ (1+2\varepsilon)^d.
\end{equation*}
Because $\varepsilon > 0$ is arbitrary, it immediately follows that
\begin{equation}\label{eq:73}
\limsup_{L\to\infty}
\frac{\#\{m:\langle j_m \rangle\le L\}}{(2L+1)^d}
\ \le\ 1.
\end{equation}
In particular, there exists $\gamma >0$ (independent of $L$) such that
\begin{equation} \label{contagemjm}
\#\{m:\langle  j_m \rangle\le L\} \ \le\ \gamma\, L^d \, \,  \text{for all } L\ge1.
\end{equation}
\end{proof}

We have the following result, which is a direct consequence of the Lemma \ref{lemma3} and is of independent interest.

\begin{proposition}\label{prop:summability-centers} Let $\alpha>\beta+\frac d2$ and suppose that $H$ exhibits
$(\alpha,\beta)$-Power-Law SULE with localization centers
$\{j_m\}_{m\in\mathbb Z^d}$. Then, for every $\varepsilon>0$, one has
\[
A_\varepsilon:=\sum_{m\in\mathbb Z^d}
\frac{1}{\langle j_m\rangle^{d+\varepsilon}}<\infty.
\]
\end{proposition}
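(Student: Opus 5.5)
The plan is a dyadic decomposition of the index set according to the size of $\langle j_m\rangle$, combined with the counting bound \eqref{contagemjm} from Lemma \ref{lemma3}. Since $\langle j_m\rangle\ge 1$ for every $m$, each $m\in\mathbb Z^d$ lies in exactly one shell
\[
S_k:=\{m\in\mathbb Z^d:\ 2^{k-1}<\langle j_m\rangle\le 2^k\},\qquad k\ge 0,
\]
with the convention that $S_0=\{m:\langle j_m\rangle=1\}$. By Lemma \ref{lemma3} each $S_k$ is finite. Moreover, \eqref{contagemjm} gives
\[
\#S_k\le \#\{m:\langle j_m\rangle\le 2^k\}\le \gamma\,2^{kd},
\]
with $\gamma$ independent of $k$.

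On $S_k$ with $k\ge1$ one has $\langle j_m\rangle^{-(d+\varepsilon)}\le 2^{-(k-1)(d+\varepsilon)}$. On $S_0$ the summand equals $1$. Summing shell by shell gives
\[
A_\varepsilon\le \gamma+\sum_{k\ge1}\gamma\,2^{kd}\,2^{-(k-1)(d+\varepsilon)}
=\gamma+\gamma\,2^{d+\varepsilon}\sum_{k\ge1}2^{-k\varepsilon}<\infty .
\]
All terms are nonnegative, so the rearrangement into shells is legitimate, and this proves the claim.

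An equivalent route avoids dyadic shells. Write $N(L)=\#\{m:\langle j_m\rangle\le L\}$ and express the sum as the Stieltjes integral $\int_{1^-}^\infty L^{-(d+\varepsilon)}\,dN(L)$. Integration by parts, using $N(L)\le\gamma L^d$, bounds this by
\[
(d+\varepsilon)\gamma\int_1^\infty L^{-1-\varepsilon}\,dL<\infty,
\]
and the boundary term vanishes at infinity. I would present the dyadic version because it is cleaner.

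There is no real obstacle here. The only points to check are that \eqref{contagemjm} holds uniformly for all $L\ge1$, not merely asymptotically, and that every $m$ has $\langle j_m\rangle\ge1$ so that the shells exhaust $\mathbb Z^d$. Uniformity follows because the $\limsup$ bound handles large $L$ while finiteness of each set handles bounded $L$, and Lemma \ref{lemma3} already records this as \eqref{contagemjm}.
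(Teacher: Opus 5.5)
Your proof is correct and follows the paper's argument: a dyadic shell decomposition in $\langle j_m\rangle$, combined with the uniform counting bound \eqref{contagemjm} from Lemma \ref{lemma3}, reduces the sum to the geometric series $\sum 2^{-k\varepsilon}$. The paper uses shells $2^\ell\le\langle j_m\rangle<2^{\ell+1}$ rather than your $(2^{k-1},2^k]$, which makes no difference, and your justification that \eqref{contagemjm} holds for all $L\ge 1$ (finiteness for bounded $L$, the $\limsup$ for large $L$) spells out a step the paper only asserts with ``in particular''.
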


\begin{proof}
Fix $\varepsilon>0$. For each $\ell\geq 0$, define the dyadic shell
\[
S_\ell:=\left\{m\in\mathbb Z^d:2^\ell\leq \langle j_m\rangle<2^{\ell+1}\right\}.
\]
By the Lemma \ref{lemma3}, we have
\[
\#S_\ell
\leq \#\left\{m\in\mathbb Z^d:\langle j_m\rangle<2^{\ell+1}\right\}
\leq \gamma 2^{(\ell+1)d}.
\]
Therefore, reorganizing the series into dyadic shells, we obtain
\begin{align*}
\sum_{m\in\mathbb Z^d}
\frac{1}{\langle j_m\rangle^{d+\varepsilon}}
&=
\sum_{\ell=0}^{\infty}
\sum_{m\in S_\ell}
\frac{1}{\langle j_m\rangle^{d+\varepsilon}}  \le
\sum_{\ell=0}^{\infty}
\sum_{m\in S_\ell}
\frac{1}{2^{\ell(d+\varepsilon)}} \le
\sum_{\ell=0}^{\infty}
\frac{\#S_\ell}{2^{\ell(d+\varepsilon)}} \\
&\le
\sum_{\ell=0}^{\infty}
\frac{\gamma 2^{(\ell+1)d}}{2^{\ell(d+\varepsilon)}} =
\gamma 2^d
\sum_{\ell=0}^{\infty}
2^{-\ell\varepsilon}
<\infty.
\end{align*}
\end{proof}

\subsection{Proof of Theorem \ref{mainthm} (ii)}

Note that since $\alpha>\beta+d$, we have
\[
\frac{\alpha}{\alpha-\beta}<\frac{\alpha}{d}.
\]
Hence, we may choose
\[
p\in\left(\frac{\alpha}{\alpha-\beta},\frac{\alpha}{d}\right),
\]
and let $p'$ be its conjugate exponent. Then
\[
\alpha>pd
\, \, \text{and}\, \, 
\beta<\alpha\left(1-\frac1p\right)=\frac{\alpha}{p'}.
\]

To establish the full limit, let $0<\varepsilon <1$ and consider the following spatial decomposition based on the localization centers 
\[
A_L:=\Big\{m:\ \langle j_m \rangle\le \frac{1+\varepsilon}{1-\varepsilon}\,L\Big\},
\, \, 
B_L:=\Big\{m:\ \langle j_m \rangle> \frac{1+\varepsilon}{1-\varepsilon}\,L\Big\}.
\]
For every $m\in B_L$ and every $n$ satisfying $\langle n \rangle\le L$, we have $\langle n - j_m \rangle \ge \varepsilon(\langle  j_m \rangle+L)$. Hence, by Lemma \ref{lema7.2}, for every $m\in B_L$,
\[
\sum_{\langle n  \rangle\le L} |\varphi_m(n)|^2 \ \le\ D_{\varepsilon,\alpha,\beta}\,\,\langle  j_m \rangle^{-(2\alpha/p)+d}.
\]
Summing this over all $m\in B_L$, we obtain
\[
\sum_{m\in B_L}\sum_{\langle n\rangle\le L}
|\varphi_m(n)|^2
\le
D_{\varepsilon,\alpha,\beta}
\sum_{\ell=0}^{\infty}
\sum_{2^\ell L\le \langle j_m\rangle <2^{\ell+1}L}
\langle j_m\rangle^{-2\alpha/p+d}.
\]
Since $\langle j_m\rangle\ge 2^\ell L$ in the $\ell$-th shell, we can bound the inner sum by
\[
\sum_{2^\ell L\le \langle j_m\rangle <2^{\ell+1}L}
\langle j_m\rangle^{-2\alpha/p+d}
\le
\#\{m:\langle j_m\rangle<2^{\ell+1}L\}
(2^\ell L)^{-2\alpha/p+d}.
\]
From Lemma \ref{lemma3}, we know that there exists $\gamma>0$ such that
\[
\#\{m:\langle j_m\rangle<R\}\le \gamma R^d
\]
for all $R>0$. Therefore,
\[
\#\{m:\langle j_m\rangle<2^{\ell+1}L\}
\le
\gamma (2^{\ell+1}L)^d.
\]
Hence,
\begin{align*}
\sum_{2^\ell L\le \langle j_m\rangle <2^{\ell+1}L}
\langle j_m\rangle^{-2\alpha/p+d}
&\le
\gamma (2^{\ell+1}L)^d
(2^\ell L)^{-2\alpha/p+d} \\
&=
\gamma 2^d
L^{2d-2\alpha/p}
2^{\ell(2d-2\alpha/p)}.
\end{align*}
Inserting this into the bound for $B_L$ gives
\[
\sum_{m\in B_L}\sum_{\langle n\rangle\le L}
|\varphi_m(n)|^2
\le
\gamma D_{\varepsilon,\alpha,\beta} 2^d
L^{2d-2\alpha/p}
\sum_{\ell=0}^{\infty}
2^{\ell(2d-2\alpha/p)}.
\]
The last series converges because
\[
2d-\frac{2\alpha}{p}<0,
\]
that is, 
\[
\alpha>pd.
\]
Therefore, there exists $\gamma'>0$ such that
\begin{equation}\label{eq004}
\sum_{m\in B_L}\sum_{\langle n\rangle\le L}
|\varphi_m(n)|^2
\le
\gamma' D_{\varepsilon,\alpha,\beta}
L^{2d-2\alpha/p}.    
\end{equation}
Finally, exploiting the completeness of the basis in the ball of radius $\lfloor L - 1 \rfloor $,
\[
 (2\lfloor L - 1 \rfloor +1)^d =  
 \sum_{m \in \mathbb{Z}^d}\sum_{\langle n  \rangle\le L} |\varphi_m(n)|^2
\le \#A_L + \sum_{m\in B_L}\sum_{\langle n  \rangle\le L} |\varphi_m(n)|^2.
\]
Dividing by $(2L+1)^d$, using \eqref{eq004}, and noting that the exponent of $L$ is strictly negative, taking the limit inferior as $L \to \infty$ yields
\[
1 \ \le\ \liminf_{L\to\infty}
\frac{\#A_L}{(2L+1)^d}.
\]
By making the change of variables $L' = \displaystyle\frac{1+\varepsilon}{1-\varepsilon} L$ and recalling the limit superior from Lemma \ref{lemma3}, we obtain, for every $\varepsilon>0$,
\[
\left(\frac{1-\varepsilon}{1+\varepsilon} \right)^d \leq  \liminf_{L\to\infty} \frac{\#\{m:\ \langle j_m \rangle\le L\}}{(2L+1)^d} \leq \limsup_{L\to\infty} \frac{\#\{m:\ \langle j_m \rangle\le L\}}{(2L+1)^d} \leq 1.
\]
Letting $\varepsilon \to 0$ completes the proof.
\hfill \qedsymbol


\subsection{Proof of Theorem \ref{mainthm} (i) (b)}

Note that we only need to prove the result for sufficiently small $\varepsilon > 0$. Let $r:=\frac12\langle n-k\rangle .$ Note that, for every $m\in\mathbb Z^d$,
\[
\langle j_m-n\rangle\geq r
\, \, \text{or}\, \, 
\langle j_m-k\rangle\geq r.
\]
Set
\[
\mathcal E_n:=\{m:\langle j_m-n\rangle\geq r\},
\, \, 
\mathcal E_k:=\{m:\langle j_m-k\rangle\geq r\}.
\]
Then, $\mathbb Z^d=\mathcal E_n\cup\mathcal E_k$.

We estimate the term over $\mathcal E_n$. By Cauchy-Schwarz, since the eigenfunctions are normalized, we have
\[
\sum_{m\in\mathcal E_n}|\phi_m(k)|\,|\phi_m(n)|
\leq
\left(\sum_{m\in\mathcal E_n}|\phi_m(n)|^2\right)^{1/2}.
\]
By Power-Law SULE,
\[
|\phi_m(n)|^2
\leq
C_{\alpha,\beta}^2
\frac{\langle j_m\rangle^{2\beta}}
{\langle j_m-n\rangle^{2\alpha}}.
\]
Moreover,
\[
\langle n\rangle
\leq
\langle k\rangle+\langle n-k\rangle
=
\langle k\rangle+2r
\leq
4\langle k\rangle r.
\]
Since $m\in\mathcal E_n$, $r\leq\langle j_m-n\rangle$, and therefore
\[
\langle j_m\rangle
\leq
\langle n\rangle+\langle j_m-n\rangle
\leq
5\langle k\rangle\langle j_m-n\rangle .
\]
Thus,
\[
\frac{\langle j_m\rangle^{2\beta}}
{\langle j_m-n\rangle^{2\alpha}}
\leq
5^{d+2\beta+2 \varepsilon}
\frac{\langle k\rangle^{d+2\beta+2 \varepsilon}}
{\langle j_m\rangle^{d+2 \varepsilon}
\langle j_m-n\rangle^{2\alpha-d-2\beta-2 \varepsilon}}.
\]
Hence, using $\langle j_m-n\rangle\geq r$ and
Proposition~\ref{prop:summability-centers},
\[
\sum_{m\in\mathcal E_n}|\phi_m(n)|^2
\leq
C_{\alpha,\beta}^2 5^{d+2\beta+2 \varepsilon}
\frac{\langle k\rangle^{d+2\beta+2 \varepsilon}}
{r^{2\alpha-d-2\beta-2 \varepsilon}}
\sum_{m \in \mathbb{Z}^d}\frac1{\langle j_m\rangle^{d+2 \varepsilon}}
\leq
A_\epsilon C_{\alpha,\beta}^2  5^{d+2\beta+2 \varepsilon}
\frac{\langle k\rangle^{d+2\beta+2 \varepsilon}}
{r^{2\alpha-d-2\beta-2 \varepsilon}}.
\]
Thus,
\[
\sum_{m\in\mathcal E_n}|\phi_m(k)|\,|\phi_m(n)|
\lesssim 
\frac{\langle k\rangle^{\frac d2+\beta+\varepsilon}}
{r^{\alpha-\beta-\frac d2-\varepsilon}},
\]
where $r= \frac{1}{2}\langle n-k\rangle$.

The estimate over $\mathcal E_k$ is identical, replacing $n$ by $k$ and using
\[
\langle j_m\rangle
\leq
2 \langle k\rangle\langle j_m-k\rangle .
\]

Finally, note that the same argument, with \(k\) and \(n\) interchanged, yields the same estimates with \(\langle n\rangle\) in place of \(\langle k\rangle\). Therefore, the result follows.
\hfill \qedsymbol

\subsection{Proof of Theorem \ref{mainthm} (i) (c)}

Assume that $H$ is self-adjoint. Using the spectral functional calculus \cite{Oliveira} and Parseval's identity, we can expand the unitary evolution in the orthonormal eigenbasis $\{\varphi_m\}_{m \in \mathbb{Z}^d}$. For any $k, n \in \mathbb{Z}^d$, this yields
\begin{eqnarray*}
\langle e^{-itH} \delta_k, \delta_n \rangle
&=& \sum_{m \in \mathbb{Z}^d} \langle \delta_k, \varphi_m \rangle \langle e^{-itH} \varphi_m, \delta_n \rangle \\
&=& \sum_{m \in \mathbb{Z}^d} e^{-it\lambda_m} \langle \delta_k, \varphi_m \rangle \langle \varphi_m, \delta_n \rangle.
\end{eqnarray*}

Taking the absolute value and applying the triangle inequality, we obtain the uniform bound
\[
\big|\langle e^{-itH} \delta_k, \delta_n \rangle\big|
\leq \sum_{m \in \mathbb{Z}^d} |\langle \delta_k, \varphi_m \rangle| \, |\langle \varphi_m, \delta_n \rangle|
= \sum_{m \in \mathbb{Z}^d} |\varphi_m(k)| \, |\varphi_m(n)|.
\]

Squaring this inequality, multiplying by the weight $\langle n \rangle^q$, and summing over $n \in \mathbb{Z}^d$, the uniform boundedness of the $q$-th moment follows directly from the correlator estimate established in Theorem~\ref{mainthm} (i) (b). The specified upper bound for $q$ ensures that the resulting series converges over the spatial lattice.
\hfill \qedsymbol


\section{Proof of Theorem \ref{thmperturbation}}\label{proofaplic}

In order to prove Theorem \ref{thmperturbation}, we will use the following result, whose proof coincides with that of Theorem 2.2 in \cite{Aloisio}, and is therefore omitted.  

\begin{theorem}\label{assintopticsthm}
Let $\eta>0$ and let $H = A + V$ on $\ell^2(\mathbb{Z})$, where $A$ is bounded  self-adjoint  and
\[
(Vu)(n) = \mathrm{sgn}(n)|n|^\eta\,u(n), \, \,  n \in \mathbb{Z}.
\]
Then $H$ is self-adjoint with purely discrete spectrum $\{\lambda_n\}_{n\in\mathbb{Z}}$. In this case, one writes
\begin{equation*}
\dots\lambda_{-n}, \dots, \lambda_{-2} \leq \lambda_{-1} \leq \lambda_0 \leq \lambda_1 \leq \lambda_2, \dots, \lambda_n  \dots 
\end{equation*}
where $\lambda_0 = \displaystyle\min \{\lambda_n \mid \,  \lambda_n \geq 0 \}$. Moreover,
\[
|\lambda_n - \mathrm{sgn}(n)|n|^\eta| \le ||A||, \, \,  n \in \mathbb{Z}.
\]
\end{theorem}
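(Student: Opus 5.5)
I would prove Theorem \ref{assintopticsthm} in three steps: self-adjointness by Kato--Rellich, compactness of the resolvent from the unboundedness of $V$, and the eigenvalue bound by a min-max comparison with the diagonal operator $V$.

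\emph{Self-adjointness and discreteness.} The multiplication operator $V$ is self-adjoint on $\operatorname{dom} V=\{u:\sum|n|^{2\eta}|u(n)|^2<\infty\}$, with spectrum $\{\operatorname{sgn}(n)|n|^\eta\}_{n\in\mathbb Z}$. This is a discrete set in which every value has multiplicity at most two (only $0$ arises from $n=0$). Since $A$ is bounded and self-adjoint, the Kato--Rellich theorem (relative bound $0$) shows that $H=A+V$ is self-adjoint on $\operatorname{dom}V$.

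To get discreteness, I will check that $(V-i)^{-1}$ is compact. It is diagonal with entries $(\operatorname{sgn}(n)|n|^\eta-i)^{-1}\to0$, so it is a norm limit of finite-rank operators. The resolvent identity
\[
(H-i)^{-1}=(V-i)^{-1}-(H-i)^{-1}A(V-i)^{-1}
\]
then shows that $(H-i)^{-1}$ is compact. Hence $\sigma(H)$ consists of isolated eigenvalues of finite multiplicity accumulating only at $\pm\infty$.

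\emph{Both directions are unbounded.} I need infinitely many eigenvalues in each of $(-\infty,0)$ and $[0,\infty)$. For this, compare spectral counting functions on intervals. If $N_T(I)$ denotes the number of eigenvalues of $T$ in $I$, the min-max principle applied to the bounded perturbation $A$ gives
\[
N_V\bigl((a+\|A\|,\,b-\|A\|)\bigr)\le N_H\bigl((a,b)\bigr)\le N_V\bigl((a-\|A\|,\,b+\|A\|)\bigr).
\]
Because $V$ has infinitely many eigenvalues in each direction, so does $H$. This justifies the bi-infinite labeling with $\lambda_0=\min\{\lambda_n\ge0\}$.

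\emph{Eigenvalue bound (main obstacle).} The key issue is the labeling. Min-max only pairs eigenvalues ranked from a fixed end, but here the spectrum is unbounded on both sides, so there is no natural end to rank from. I would argue via counting functions. Set
\[
\mathcal N_T(E)=\#\{\text{eigenvalues in }[0,E)\}\ \text{for }E\ge0,\qquad \mathcal N_T(E)=-\#\{\text{eigenvalues in }[E,0)\}\ \text{for }E<0.
\]
I would establish the shift inequality
\[
\mathcal N_V(E-\|A\|)-c\le \mathcal N_H(E)\le \mathcal N_V(E+\|A\|)+c.
\]
The constant $c$ must be shown to be $0$. Two routes seem plausible. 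One is the homotopy $H_s=V+sA$, $s\in[0,1]$: eigenvalues move continuously with $|\partial_s\lambda|\le\|A\|$, so each branch starting at $\operatorname{sgn}(n)|n|^\eta$ stays within $\|A\|$ of it. Labels are preserved provided branches are followed in increasing order. The other is the approach of \cite{Aloisio}, Theorem 2.2, whose proof the statement says coincides with this one; I would follow it directly. The delicate point is to show that the reference index chosen by $\lambda_0=\min\{\lambda_n\ge0\}$ matches the index of the branch emanating from $V$'s value $0$ at $n=0$. If the index shifted by some integer $j\neq0$, then eventually $|\lambda_n-\operatorname{sgn}(n)|n|^\eta|$ would be about $|(n+j)^\eta-n^\eta|$. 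For $\eta<1$ this tends to $0$, so an index shift cannot be ruled out by asymptotics alone, and the bound may only hold up to relabeling. I would therefore handle this matching by the continuity argument and carefully track the crossing of $0$.

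Once the matching is settled, the ordered comparison gives
\[
\operatorname{sgn}(n)|n|^\eta-\|A\|\le\lambda_n\le\operatorname{sgn}(n)|n|^\eta+\|A\|,
\]
which is the claim.
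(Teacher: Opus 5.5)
There is a genuine gap, and it sits exactly at the step you flag and then postpone. The earlier steps are fine: $H$ is self-adjoint on $\operatorname{dom}V$, the resolvent identity makes $(H-i)^{-1}$ compact, and the interval comparison gives infinitely many eigenvalues on each side. That comparison is rigorous if you apply min--max to $(H-c)^2$, since $H$ is unbounded in both directions. (Minor slip: $n\mapsto\mathrm{sgn}(n)|n|^\eta$ is injective, so every eigenvalue of $V$ is simple.)

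The gap is the claim that the eigenvalue selected by $\lambda_0=\min\{\lambda_n\ge0\}$ is the endpoint of the branch issuing from $V(0)=0$, i.e.\ that the integer $c$ in your shift inequality is $0$. This cannot be proved, because it is false. Take $A=-\kappa I$ with $0<\kappa<\tfrac12$, so $\|A\|=\kappa$ and $\sigma(H)=\{\mathrm{sgn}(n)|n|^\eta-\kappa\}_{n\in\mathbb Z}$. The branch from $n=0$ ends at $-\kappa<0$, and the smallest nonnegative eigenvalue is $1-\kappa$. The prescribed labeling is therefore $\lambda_n=\mathrm{sgn}(n+1)|n+1|^\eta-\kappa$, and $|\lambda_0-0|=1-\kappa>\kappa=\|A\|$. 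In your counting language, $\mathcal N_V(E-\|A\|)=1>0=\mathcal N_H(E)$ for $\kappa<E\le 1-\kappa$. So $c$ is the net spectral flow through $0$, here equal to $1$. For $\eta>1$ the same example gives $\lambda_n-n^\eta=(n+1)^\eta-n^\eta-\kappa\to\infty$, so the claim fails even up to a constant. Your suspicion that the bound ``may only hold up to relabeling'' is correct, and tracking the crossing of $0$ cannot remove it: the crossing really happens.

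What your homotopy argument does establish is the correct version of the statement. The ingredients are analytic eigenvalue branches of $V+sA$, the bound $|\partial_s\mu|\le\|A\|$ from Hellmann--Feynman, and re-sorting. This gives a nondecreasing enumeration $(\mu_n)_{n\in\mathbb Z}$ of $\sigma(H)$ with $|\mu_n-\mathrm{sgn}(n)|n|^\eta|\le\|A\|$. Its origin is fixed by continuation from $s=0$, not by the sign of the eigenvalues.

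The sign-based labeling is $\lambda_n=\mu_{n+j}$, where $j$ is the least index with $\mu_j\ge0$. From $\mu_{j-1}<0\le\mu_j$ you get $-\|A\|^{1/\eta}\le j<1+\|A\|^{1/\eta}$. For $0<\eta\le1$, the inequality $|\mathrm{sgn}(x)|x|^\eta-\mathrm{sgn}(y)|y|^\eta|\le 2^{1-\eta}|x-y|^\eta$ then yields $|\lambda_n-\mathrm{sgn}(n)|n|^\eta|\le\|A\|+2^{1-\eta}|j|^\eta$. That is a bounded error, but not $\|A\|$, and it is all that Theorem~\ref{thmperturbation} actually uses.

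The paper gives no argument of its own here (it defers to \cite{Aloisio}), so there is no alternative route to compare. The repair has to go into the statement, not into your proof: either fix the origin by continuation, or replace $\|A\|$ by a larger constant when $\eta\le1$.
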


\begin{lemma} \label{teclemma004}
Let $\eta\in(0,1)$ and define, for each $m\in\mathbb{Z}$,
\[
V_\eta(m)=\mathrm{sgn}(m)\,|m|^\eta.
\]
Then, for all $m,n\in\mathbb{Z}$ with $m\neq n$ and $(m,n)\neq(0,0)$,
\begin{equation*}
\frac{1}{|V_\eta(m)-V_\eta(n)|}
\leq
\frac{C_\eta\,\langle m \rangle^{1-\eta}}{\langle m - n \rangle^\eta},
\end{equation*}
where $C_\eta=2^{\eta+1}\eta^{-1}$.
\end{lemma}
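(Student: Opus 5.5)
We argue by cases on the signs of $m$ and $n$. In every case the aim is a lower bound $|V_\eta(m)-V_\eta(n)|\gtrsim \eta\,|m-n|^{\eta}\langle m\rangle^{\eta-1}$ (or the stronger $|m-n|^\eta$). We then convert $|m-n|$ into $\langle m-n\rangle$ using $|m-n|\ge 1$, which gives $\langle m-n\rangle=|m-n|+1\le 2|m-n|$ and hence $|m-n|^{-\eta}\le 2^\eta\langle m-n\rangle^{-\eta}$. This conversion is the source of the factor $2^\eta$ in $C_\eta$. Since $V_\eta$ is odd and both sides of the inequality are invariant under $(m,n)\mapsto(-m,-n)$, we may reduce the sign analysis accordingly.

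\emph{Case 1: $m$ and $n$ have opposite signs, or one of them is $0$.} Then $|V_\eta(m)-V_\eta(n)|=|m|^\eta+|n|^\eta$. Since $0<\eta<1$, the map $t\mapsto t^\eta$ is subadditive, so
\[
|m|^\eta+|n|^\eta\ge(|m|+|n|)^\eta=|m-n|^\eta .
\]
Together with $\langle m\rangle^{1-\eta}\ge1$, this gives the bound with constant $2^\eta\le C_\eta$.

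\emph{Case 2: $m,n$ have the same sign, both nonzero.} By oddness we may take $m,n>0$. The mean value theorem gives
\[
|m^\eta-n^\eta|=\eta\,\xi^{\eta-1}|m-n|
\]
for some $\xi$ between $m$ and $n$. Since $1-\eta>0$ and $\xi\le\max(m,n)\le m+|m-n|$, we get
\[
\frac1{|V_\eta(m)-V_\eta(n)|}\le\frac{(m+|m-n|)^{1-\eta}}{\eta\,|m-n|}.
\]
We split according to the size of $|m-n|$.

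\emph{Subcase 2a: $|m-n|\le\langle m\rangle$.} This includes every situation with $n<m$. Here $(m+|m-n|)^{1-\eta}\le(2\langle m\rangle)^{1-\eta}$, and since $|m-n|\ge1$ we have $1/|m-n|\le|m-n|^{-\eta}\le 2^\eta\langle m-n\rangle^{-\eta}$. The resulting constant is $2^{1-\eta}2^\eta/\eta=2/\eta\le C_\eta$.

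\emph{Subcase 2b: $|m-n|>\langle m\rangle$.} Here $(m+|m-n|)^{1-\eta}\le(2|m-n|)^{1-\eta}$, so the right-hand side is at most $2^{1-\eta}\eta^{-1}|m-n|^{-\eta}\le 2\eta^{-1}\langle m-n\rangle^{-\eta}$. Multiplying by $\langle m\rangle^{1-\eta}\ge1$ only enlarges this, so the claim holds with constant $2/\eta\le C_\eta$.

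The only delicate point is Subcase 2b, where the two points are far apart relative to $\langle m\rangle$. There the naive MVT factor $\xi^{\eta-1}$ must be controlled by $|m-n|$ rather than by $\langle m\rangle$. The bound $\xi\le m+|m-n|$ handles this uniformly, so no constant worse than $2^{\eta+1}/\eta$ arises.
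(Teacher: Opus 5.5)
Your proof is correct and follows essentially the same route as the paper: a case split on signs, subadditivity of $t\mapsto t^\eta$ when $m,n$ have opposite signs (or one vanishes), the mean value theorem when they share a sign, and $|m-n|\ge 1$ to pass from $|m-n|^{-\eta}$ to $2^\eta\langle m-n\rangle^{-\eta}$. The only difference is in the same-sign case: the paper separates $m>n$ from $n>m$ and uses $|n|^{1-\eta}\le|m|^{1-\eta}+|n-m|^{1-\eta}$, while you use the uniform bound $\xi\le m+|m-n|$ and split on $|m-n|$ versus $\langle m\rangle$, which gives the slightly better constant $2/\eta$.
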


\begin{proof} Note that for $n = 0$ or $m = 0$ the result is immediate. Hence, we assume that $m, n \neq 0$.

We first consider the case in which $m$ and $n$ have the same sign.

\noindent\textbf{Case 1: $\mathrm{sgn}(m)=\mathrm{sgn}(n)=+$}.  
Assume that $m>n$. Since $x^\eta$ is differentiable on $(0,\infty)$, the Mean Value Theorem yields a point $z\in(n,m)$ such that
\[
m^\eta-n^\eta=\eta z^{\eta-1}(m-n).
\]
As $\eta-1<0$ and $z<m$, we have $z^{\eta-1}\ge m^{\eta-1}$. Moreover, since $m-n\ge1$ and $x\ge x^\eta$ for $x\ge1$, it follows that
\[
m^\eta-n^\eta
\ge
\eta m^{\eta-1}(m-n) \ge
\eta m^{\eta-1}(m-n)^\eta.
\]
Hence,
\begin{eqnarray*}
\frac{1}{|V_\eta(m)-V_\eta(n)|} &=& \frac{1}{\bigl||m|^\eta-|n|^\eta\bigr|}
\le
\frac{\eta^{-1}|m|^{1-\eta}}{|m-n|}\\
&\le&
\frac{2^\eta \eta^{-1} (|m|+1)^{1-\eta}}{(|m-n|+1)^\eta}= \frac{2^\eta \eta^{-1}\,\langle m \rangle ^{1-\eta}}{\langle m -n \rangle ^\eta}.    
\end{eqnarray*}
If instead $n>m$, the previous estimate gives
\[
\frac{1}{\bigl||m|^\eta-|n|^\eta\bigr|}
\le
\frac{\eta^{-1}|n|^{1-\eta}}{|m-n|}.
\]
Using $|n|\le |m|+|n-m|$ together with the inequality $(x+y)^p\le x^p+y^p$ for $p\in(0,1)$, we obtain
\[
|n|^{1-\eta}
\le
|m|^{1-\eta}+|n-m|^{1-\eta}.
\]
Since $|n-m|^{\eta}\le |n-m|$ for $|n-m|\ge1$, it follows that
\begin{eqnarray*}
\frac{1}{|V_\eta(m)-V_\eta(n)|} &=& \frac{1}{\bigl||m|^\eta-|n|^\eta\bigr|}
\le
\frac{\eta^{-1} (|m|^{1-\eta}+1)}{|m-n|^\eta}\\ &\le& \frac{2^{\eta+1} \eta^{-1}\,\langle m \rangle ^{1-\eta}}{\langle m -n \rangle ^\eta}.    
\end{eqnarray*}

\noindent\textbf{Case 2: $\mathrm{sgn}(m)=\mathrm{sgn}(n)=-$}.  
If $m>n$, then $0<-m<-n$ and the estimate follows from the previous case by symmetry. The situation $n>m$ is analogous.

\noindent\textbf{Case 3: $\mathrm{sgn}(m)\neq\mathrm{sgn}(n)$}.  
In this case,
\[
|V_\eta(m)-V_\eta(n)|=|m|^\eta+|n|^\eta,
\, \, 
|m-n|=|m|+|n|.
\]
Using again $(x+y)^p\le x^p+y^p$, $p\in(0,1)$, we obtain
\[
|m-n|^\eta
\le
|m|^\eta+|n|^\eta
=
|V_\eta(m)-V_\eta(n)|.
\]
Therefore,
\[
\frac{1}{|V_\eta(m)-V_\eta(n)|}
\le
\frac{1}{|m-n|^\eta}
\le
 \frac{2^{\eta+1} \eta^{-1}\,\langle m \rangle ^{1-\eta}}{\langle m -n \rangle ^\eta}.
\]
This completes the proof.
\end{proof}

\subsection{Proof of Theorem \ref{thmperturbation}}

Part (i) follows from Theorem \ref{assintopticsthm} by taking $A = T_a + b$. Part (iii) follows from Theorem~\ref{mainthm} combined with part (ii). Therefore, it remains only to prove part (ii). 

Let $\{\varphi_m\}_{m\in\mathbb{Z}}$ be an orthonormal basis of $\ell^2(\mathbb{Z})$ consisting of eigenfunctions of $T_\eta + b$, that is,
\[
(T_\eta + b)\varphi_m = \lambda_m \varphi_m, \, \,  m \in \mathbb{Z}.
\]

Since $b \in \ell^\infty(\mathbb{Z})$, by part (i), there exists $\gamma > 0$ such that
\[
\bigl|\lambda_m - \operatorname{sgn}(m)|m|^\eta - b(n)\bigr| \le \gamma,
\, \,  \text{for all } m,n \in \mathbb{Z}.
\]

Let $V_\eta(m) = \operatorname{sgn}(m)|m|^\eta$ and let $r \geq 0$. Assume first that
\[
|V_\eta(m)-V_\eta(n)| \le 2\gamma.
\]
Then, by Lemma~\ref{teclemma004}, for $m \neq n$ and $(m,n) \neq (0,0)$, we can then bound the eigenfunction by its $\ell^2$ norm,
\[
|\varphi_m(n)| \le 1
\le \frac{(2\gamma)^{r+1}}{|V_\eta(m)-V_\eta(n)|^{r+1}}
\le \frac{(2\gamma)^{r+1} C_\eta^{r+1}\, \langle m \rangle^{(r+1)(1-\eta)}}{\langle m - n \rangle^{(r+1)\eta}},
\]
where $C_\eta = 2^{\eta+1}\eta^{-1}$. When $m = n$ or $(m,n) = (0,0)$, we directly have
\[
|\varphi_m(n)| \le 1
\le \frac{\langle m \rangle^{(r+1)(1-\eta)}}{\langle m - n \rangle^{(r+1)\eta}}.
\]
Therefore, we only need to estimate the case where the potential difference is large, namely,
\[
|V_\eta(m)-V_\eta(n)| > 2\gamma.
\]

Note that the eigenvalue equation
\[
(T_\eta + b)\varphi_m(n) = \lambda_m \varphi_m(n)
\iff
\bigl(T_a + \operatorname{sgn}(n)|n|^\eta + b(n)\bigr)\varphi_m(n)
= \lambda_m \varphi_m(n)
\]
can be rewritten to isolate $\varphi_m(n)$ resulting in
\[
\varphi_m(n)
=
\frac{(T_a\varphi_m)(n)}
{\lambda_m - (\operatorname{sgn}(n)|n|^\eta + b(n))}
=
\frac{\sum_{k\in\mathbb{Z}} a(k)\varphi_m(n-k)}
{\lambda_m - \operatorname{sgn}(n)|n|^\eta - b(n)}.
\]
Taking absolute values and applying the triangle inequality to the denominator yields
\[
|\varphi_m(n)|
\le
\frac{\sum_{k\in\mathbb{Z}} |a(k)|\,|\varphi_m(n-k)|}
{\bigl|\operatorname{sgn}(n)|n|^\eta - \operatorname{sgn}(m)|m|^\eta \bigr|  - \bigl|\lambda_m - \operatorname{sgn}(m)|m|^\eta - b(n)\bigr|}.
\]
Since we assumed $|V_\eta(m)-V_\eta(n)| > 2\gamma$, the denominator is bounded from below by
\[|V_\eta(m)-V_\eta(n)| - \gamma > \frac{1}{2}|V_\eta(m)-V_\eta(n)|.\] 
Thus,
\begin{eqnarray}\label{maineq0101}
\nonumber |\varphi_m(n)|
&\le&
\frac{\sum_{k\in\mathbb{Z}} |a(k)|\,|\varphi_m(n-k)|}
{|\operatorname{sgn}(m)|m|^\eta - \operatorname{sgn}(n)|n|^\eta| - \gamma}
\\
&\le&
\frac{2\sum_{k\in\mathbb{Z}} |a(k)|\,|\varphi_m(n-k)|}
{|\operatorname{sgn}(m)|m|^\eta - \operatorname{sgn}(n)|n|^\eta|}.
\end{eqnarray}

From this point onward, we divide the proof into cases. First, we prove the result for each \(r \in \mathbb{N} \cup \{0\}\).

\ 

\noindent \textbf{Case 1 :} r $\in \mathbb{N} \cup \{0\}$.  We now proceed by induction on $r$. 
\vspace{0.5em}

\noindent \textbf{Base Case ($r=0$):} 
If $a \in \ell^1_0(\mathbb{Z})$, by applying Lemma \ref{teclemma004} to \eqref{maineq0101} and bounding $|\varphi_m(n-k)| \le 1$, we conclude there exists a constant $\gamma_{0,\eta} > 0$, depending only on $\eta$ and $\|a\|_0$, such that
\[
|\varphi_m(n)|
\le
\frac{2 C_\eta \sum_{k\in\mathbb{Z}} |a(k)|\,\langle m \rangle^{1-\eta}}
{\langle m - n \rangle^\eta} \le \gamma_{0,\eta}\,
\frac{\langle m \rangle^{1-\eta}}{\langle m - n \rangle^\eta}.
\]

\vspace{0.5em}
\noindent \textbf{Induction Step:} 
Assume that the result holds for some $r \geq 0$, meaning that if $a \in \ell^1_{r}(\mathbb{Z})$, then
\[
|\varphi_m(n)|
\le \gamma_{r,\eta}\,
\frac{\langle m \rangle^{(r+1)(1-\eta)}}
{\langle m - n \rangle^{(r+1)\eta}}.
\]
We will prove that the same estimate holds for $r+1$ if $a \in \ell^1_{r+1}(\mathbb{Z})$.

Let $a \in \ell^1_{r+1}(\mathbb{Z}) \subset \ell^1_{r}(\mathbb{Z})$. Consider a shift $k \in \mathbb{Z}$ and suppose that $|(m-n)+k| \ge 2$ and $|k| \ge 2$. Using the fact that $A+B \le AB$ for $A,B \ge 2$, we have
\begin{equation}\label{eq010102}
|m-n|
\le |(m-n)+k| + |-k|
\le |(m-n)+k|\,|k|
\iff
\frac{1}{|(m-n)+k|}
\le \frac{|k|}{|m-n|}.
\end{equation}
By the induction hypothesis and \eqref{eq010102}, for $|(m-n)+k| \ge 2$ and $|k| \ge 2$, we obtain
\begin{eqnarray*}
|\varphi_m(n-k)|
&\le& \gamma_{r,\eta}\,
\frac{\langle m \rangle^{(r+1)(1-\eta)}}
{\langle m-n+k \rangle^{(r+1)\eta}} \\
&\le&  \gamma_{r,\eta}\,
\frac{\langle m \rangle^{(r+1)(1-\eta)}}
{|m-n+k|^{(r+1)\eta}} \\
&\le&  \gamma_{r,\eta}\,
\frac{\langle m \rangle^{(r+1)(1-\eta)}}
{|m-n|^{(r+1)\eta}}\,|k|^{(r+1)\eta} \\
&\le&  2^{(r+1)\eta} \gamma_{r,\eta}\,
\frac{\langle m \rangle^{(r+1)(1-\eta)}}
{\langle m-n \rangle^{(r+1)\eta}}\,|k|^{r+1}.
\end{eqnarray*}

Since $a(0)=0$, we can substitute this bound into \eqref{maineq0101} and apply Lemma~\ref{teclemma004} to factor out the potential difference, yielding
\[
|\varphi_m(n)|
\le
\frac{2 C_\eta \,\langle m \rangle^{1-\eta}}
{\langle m - n \rangle^\eta}
\sum_{k\in\mathbb{Z}} |a(k)|\,|\varphi_m(n-k)|.
\]
We split the sum into the bulk terms where the induction bound applies, and the boundary terms corresponding to small $|k|$ or small $|(m-n)+k|$, that is,
\begin{eqnarray}\label{maineq07}
\nonumber |\varphi_m(n)|
&\le&
\frac{2^{(r+1)\eta+1} \gamma_{r,\eta} C_\eta \,\langle m \rangle^{(r+2)(1-\eta)}}
{\langle m - n \rangle^{(r+2)\eta}}
\sum_{\substack{|(m-n)+k|\ge 2\\ |k|\ge 2}}
|a(k)|\,|k|^{r+1} \\
\nonumber&+&
\frac{2 C_\eta \,\langle m \rangle^{1-\eta}}
{\langle m - n \rangle^\eta}
|a(\pm 1)|\,|\varphi_m(n \mp 1)| \\
\nonumber &+&
\frac{2 C_\eta \,\langle m \rangle^{1-\eta}}
{\langle m - n \rangle^\eta}
|a(n-m)|\,|\varphi_m(m)| \\
&+&
\frac{2 C_\eta \,\langle m \rangle^{1-\eta}}
{\langle m - n \rangle^\eta}
|a(n-m\pm 1)|\,|\varphi_m(m \mp 1)|.
\end{eqnarray}

Since $a \in \ell_{r+1}^1(\mathbb{Z})$, the sequence decays sufficiently fast, and there exists a constant $\gamma'_r > 0$ such that for all $k \in \mathbb{Z}$,
\begin{equation}\label{eqfn}
|a(k)| \le \gamma'_r \frac{1}{\langle k \rangle^{r+1}}
\le \gamma'_r \frac{1}{\langle k \rangle^{(r+1)\eta}}.    
\end{equation}

Finally, evaluate each part of \eqref{maineq07}. The first sum converges and is bounded by $\|a\|_{r+1}$; the second term is controlled by applying the induction hypothesis to $\varphi_m(n \mp 1)$; and the third and fourth terms are controlled by applying the decay bound \eqref{eqfn} to $a(n-m)$ and $a(n-m \pm 1)$. Combining these contributions, we conclude that there exists a constant $\widetilde{\gamma_{r,\eta}} > 0$, depending only on $\eta$, $r$, and $\|a\|_{r+1}$, such that
\[
|\varphi_m(n)|
\le \widetilde{\gamma_{r,\eta}}
\frac{\langle m \rangle^{(r+2)(1-\eta)}}
{\langle m - n \rangle^{(r+2)\eta}}.
\]
This completes the induction step and the proof of the Case 1.

\ 

\noindent \textbf{Case 2 : \(r\in[0,1).\)} Set
\[
M:=\langle m\rangle,
\, \, 
R:=\langle m-n\rangle.
\]
We claim that there exists \(C_{r,\eta}>0\) such that
\begin{equation}\label{eqmain01012}
|\phi_m(n)|
\le
C_{r,\eta}
\frac{M^{(r+1)(1-\eta)}}{R^{(r+1)\eta}}.
\end{equation}

Note that we may assume that \(R \geq 4\), since the case \(R \leq 4\) follows from $|\phi_m(n)| \leq 1.$

Note also that if \(R^\eta\le M^{1-\eta}\), then \eqref{eqmain01012} follows from \(|\phi_m(n)|\le1\). Hence, we may assume that
\begin{equation}\label{eqmain010123}
R^\eta>M^{1-\eta}.
\end{equation}

Recall that, by \eqref{maineq0101}, we have
\begin{equation}\label{maineq010133}
|\phi_m(n)|
\le
C_{r,\eta}
\frac{M^{1-\eta}}{R^\eta}
\sum_{k\in\mathbb Z}
|a(k)|\,|\phi_m(n-k)|.
\end{equation}
We split the sum into the regions \(|k|\le R/2\) and \(|k|>R/2\).

For \(|k|\le R/2\), one has
\[
\langle m-(n-k)\rangle\gtrsim R.
\]
Using the estimate corresponding to \(r=0\), it follows that
\[
|\phi_m(n-k)|
\lesssim
\frac{M^{1-\eta}}{R^\eta}.
\]
Therefore, the contribution of this region to the right-hand side of \eqref{maineq010133} is bounded by
\[
C_{r,\eta}
\frac{M^{2(1-\eta)}}{R^{2\eta}}
=
C_{r,\eta}
\frac{M^{(r+1)(1-\eta)}}{R^{(r+1)\eta}}
\left(
\frac{M^{1-\eta}}{R^\eta}
\right)^{1-r}.
\]
Since \(r<1\), condition \eqref{eqmain010123} implies that this term is bounded by
the right-hand side of \eqref{eqmain01012}.

For \(|k|>R/2\), using \(|\phi_m(n-k)|\le1\) and \(a\in\ell^1_r(\mathbb Z)\), we obtain
\[
\sum_{|k|>R/2}
|a(k)|\,|\phi_m(n-k)|
\le
\sum_{|k|>R/2}|a(k)|
\lesssim
R^{-r}\|a\|_r.
\]
Hence, the contribution of this region is bounded by
\[
C_{r,\eta}
\frac{M^{1-\eta}}{R^{r+\eta}}.
\]
Since
\[
r+\eta\ge (r+1)\eta
\, \, \text{and}\, \, 
M^{1-\eta}\le M^{(r+1)(1-\eta)},
\]
this term is also bounded by the right-hand side of \eqref{eqmain01012}. This proves \eqref{eqmain01012} for every \(r\in[0,1)\).

\ 

\noindent \textbf{Case 3 : $r \in \mathbb{R}$ and $r \geq 0.$} We write
\[
r=N+\theta,
\, \, 
N\in\mathbb N\cup\{0\},
\, \, 
\theta\in[0,1).
\]
The previous argument yields the estimate for \(\theta\). Since the
induction step from \(s\) to \(s+1\) does not require \(s\) to be an
integer, applying it successively for
\[
s=\theta,\theta+1,\ldots,\theta+N-1
\]
as in Case 2, yields the desired estimate for \(r=\theta+N\). Therefore, the result follows. 
\hfill \qedsymbol


\begin{center} \Large{Acknowledgments} 
\end{center}
\addcontentsline{toc}{section}{Acknowledgments}

\noindent M. Aloisio was supported by grant \#2025/25338-1 from the São Paulo Research Foundation (FAPESP) and in part by grant \#01/24/APQ-03132-24 from the Minas Gerais Research Foundation (FAPEMIG). C. R. de Oliveira thanks CNPq (a Brazilian government agency) for partial support under grant 303689/2021-8. R. Matos thanks CNPq for partial support under grants 402952/2023-5 and  402249/2024-0. D. Oliveira thanks CAPES (a Brazilian government agency) for partial support.


\ 

\noindent  \noindent Moacir Aloisio. Email: moacir.aloisio@ufvjm.edu.br, DME, UFVJM, Diamantina, MG, 39100-000, Brazil and ICMC, USP, S\~ao Carlos, SP, 13566-590, Brazil

\noindent  C\'esar R. de Oliveira. Email: oliveira@ufscar.br,  DM,   UFSCar, S\~ao Carlos, SP, 13560-970 Brazil

\noindent Rodrigo Matos. Email: rodrigo-matos@puc-rio.br, DM, PUC-Rio, Rio de Janeiro, RJ, 22451-900 Brazil

\noindent  Deberton Oliveira. Email: moura.deberton@gmail.com, DM, UFMG, Belo Horizonte, MG, 31270-901 Brazil

\noindent  Mariane Pigossi. Email: mariane.pigossi@ufes.br, DMAT, UFES, Vit\'oria, ES, 29075-910 Brazil


\begin{thebibliography}{15}
\addcontentsline{toc}{section}{References}

\bibitem{Aizenman} M. Aizenman and S. Molchanov, Localization at large disorder and at extreme energies: an elementary derivation. Commun. Math. Phys. {\bf 157} (1993), 245--278.

\bibitem{Aizenman2} M. Aizenman and S. Warzel, Random Operators. Disorder Effects on Quantum Spectra and Dynamics, GSM 168, AMS (2015).

\bibitem{Aloisio} M. Aloisio, Dynamical localization and eigenvalue asymptotics: long-range hopping lattice operators with electric field. Ann. Henri Poincaré (2026). https://doi.org/10.1007/s00023-026-01698-9

\bibitem{SMDTB} M. Aloisio, S. L. Carvalho, C. R. de Oliveira, Spectral Measures and Dynamics: Typical Behaviors.  Berlin, Springer Nature (2023). 

\bibitem{AltshulerL1997} B. L. Altshuler and L. S. Levitov,  Weak chaos in a quantum Kepler problem. Phys. Rep. {\bf 288} (1997),  487--512.

\bibitem{And58} P. W. Anderson, Absence of diffusion in certain random lattices. Phys. Rev. {\bf 109} (1958), 1492--1505.

\bibitem{Damanik} D. Damanik,  Schr\"odinger operators with dynamically defined potentials, Ergodic Theory Dynam. Systems. {\bf 37} (2017), 1681-1764

\bibitem{DamanikFillman1} D. Damanik and J. Fillman,  One-dimensional ergodic Schrödinger operators, I. general theory, Graduate Studies in Mathematics {\bf 221}, American Mathematical Society, (2022).

\bibitem{DamanikFillman2} D. Damanik and J. Fillman, One-dimensional ergodic Schrödinger operators, II. specific classes, Graduate Studies in Mathematics {\bf 249}, American Mathematical Society, (2024).

\bibitem{deMoura} F. A. B. F. de Moura and M. L. Lyra, Delocalization in the 1D Anderson model with long-range correlated disorder. Phys. Rev. Lett. {\bf 81} (1998), 3735.

\bibitem{Oliveira} C. R. de Oliveira, Intermediate spectral theory and quantum dynamics. Progress in Math. Phys. Basel, Birkh\"auser, (2009).

\bibitem{Oliveira01} C. R. de Oliveira and M. Pigossi, Exact solvable family of discrete Schr\"odinger operators with long-range hoppings. Math. Scand. {\bf 130} (2024), 527--537.

\bibitem{Pigossi1} C. R. de Oliveira and M. Pigossi, Point spectrum and SULE for time-periodic perturbations of discrete 1D Schr\"odinger operators with electric fields. J. Stat. Phys. {\bf 173} (2018), 140--162.

\bibitem{Pigossi} C. R. de Oliveira and M. Pigossi, Proof of dynamical localization for perturbations of discrete 1D Schr\"odinger operators with uniform electric fields. Math. Z. {\bf 291} (2019), 1525--1541.

\bibitem{DJLS} R. Del Rio, S. Jitomirskaya, Y. Last and B. Simon, Operators with singular continuous spectrum, IV. Hausdorff dimension, rank one perturbations and localization. J. Anal. Math. {\bf 69} (1996), 153--200.

\bibitem{Roeck} W. De Roeck, A. Hannani, A. Lerose and N. Vandenbosch, Stark localization of interacting particles. Preprint. arXiv:2602.23352 [math-ph] (2026).

\bibitem{Disertori} M. Disertori, R. Maturana Escobar and C. Rojas-Molina, Decay of the Green’s function of the fractional Anderson model and connection to long-range SAW. J. Stat. Phys. {\bf 191} (2024), article number 33.

\bibitem{Spencer} J. Fr\"ohlich and T. Spencer,  Absence of diffusion in the Anderson tight binding model for large disorder or low energy. Commun. Math. Phys.  {\bf 88} (1983), 151--184.

\bibitem{Gebert} M. Gebert and C. Rojas-Molina, Lifshitz tails for the fractional Anderson model. J. Stat. Phys. {\bf 179} (2020), 341--353.

\bibitem{Germinet} F.  Germinet, A. Kiselev  and S. Tcheremchantsev, Transfer matrices and transport for Schr\"odinger operators. Ann. Inst. Fourier. {\bf 54} (2004), 787--830.
\bibitem{Germ-Klein} F. Germinet and A. Klein, Bootstrap multiscale analysis and localization in random media. (2001) Comm. Math. Phys. {\bf 222}, 415-448.

\bibitem{Hu2} S. Hu and Y. Sun, Localized state for nonlinear disordered stark model. Preprint. arXiv:2603.09243 [math.DS] (2026).

\bibitem{Hu} S. Hu and Y. Sun, Wannier-Stark Localization for time quasi-periodic Hamiltonian operator on $\mathbb{Z}$. Ann. Henri Poincar\'e (2025). https://doi.org/10.1007/s00023-024-01533-z

\bibitem{Sun2} J. Jian and Y. Sun, Dynamical localization for power-law long-range hopping random operators on ${\mathbb{Z}}^d$. Proc. Amer. Math. Soc. {\bf 150} (2022), 5369--5381.

\bibitem{Sunpartin} J. Jian and Y. Sun, Localization of interacting random particles with power-law long-range hopping. Ann. Henri Poincaré (2026). https://doi.org/10.1007/s00023-026-01726-8

\bibitem{Kerner} J. Kerner, O. Post, M. Sabri and M. Ta\"ufer, The curious spectra and dynamics of non-locally finite crystals. Comm. Math. Phys. {\bf 406} (2025), article number 169.
\bibitem{Klein-VonD}  H. Von Dreifus and A. Klein, A new proof of localization in the Anderson tight-binding model. {\bf 2} (1989) Comm. Math. Phys. 124, 285-299.

\bibitem{Kraisler} J. Kraisler, J. Schenker and J. C. Schotland, Dynamic one-photon localization in a discrete model of quantum optics. SIAM J. Appl. Math. {\bf 86} (2026), no. 3, 776--790.

\bibitem{Nazareno} H. N. Nazareno, C. A. A. da Silva, P. E. de Brito, Dynamical localization in aperiodic 1D systems under the action of electric fields. Superlattices Microstruct. {\bf 18}  (1995), 297--307.

\bibitem{Shi1} Y. Shi, A multi-scale analysis proof of the power-law localization for random operators on ${\mathbb{Z}}^d$. J. Differ. Equ. {\bf 297} (2021), 201-225.

\bibitem{Shi2} Y. Shi, Localization for almost-periodic operators with power-law long-range hopping: A Nash-Moser iteration type reducibility approach. Commun. Math. Phys. {\bf 402} (2023), 1765--1806.

\bibitem{Shi4} Y. Shi and L. Wen, Diagonalization in a quantum kicked rotor model with non-analytic potential. J. Differ. Equ. {\bf 355} (2023), 334--368.

\bibitem{Shi5}  Y. Shi and L. Wen, Green’s function estimates for quasi-periodic operators on $\mathbb{Z}^d$ with power-law long-range hopping. Adv. Math. {\bf 482} (2025), 110661.

\bibitem{Shi55} Y. Shi and L. Wen, Localization for a class of discrete long-range quasi-periodic operators. Lett. Math. Phys. {\bf 112} (2022), Article number 86.

\bibitem{Shi6} Y. Shi, L. Wen and D. Yan, Localization for random operators on $\mathbb{Z}^d$ with the long-range hopping. Ann. Henri Poincaré (2025). https://doi.org/10.1007/s00023-025-01595-7

\bibitem{Sun} Y. Sun and C. Wang, Localization of polynomial long-range hopping lattice operator with uniform electric fields. Lett. Math. Phys. {\bf 114} (2024), Article number 6.

\bibitem{Sun3} Y. Sun and C. Wang, Stark localization of Jacobi operator with applications to quantum spin models. Lett. Math. Phys. {\bf 115} (2025), Article number 126.

\bibitem{Tcheremchantsev} S. Tcheremchantsev, How to prove dynamical localization. Commun. Math. Phys. {\bf 221} (2001), 27--56.

\end{thebibliography}
\end{document}